\documentclass[11pt,a4paper]{amsart}
\usepackage[a4paper,margin=3cm]{geometry}
\newtheorem{theorem}{Theorem}

\usepackage{graphicx}
\usepackage{amsmath}
\usepackage{amssymb}
\usepackage[latin1]{inputenc}
\usepackage{chicago}
\usepackage{latexsym}
\usepackage{tabularx} 
\usepackage{booktabs}
\usepackage{rotating}
\usepackage{environ}
\usepackage{multirow}
\usepackage{caption}
\usepackage{pdflscape}
\usepackage{lipsum}
\usepackage{enumitem}

\newtheorem{assumption}[theorem]{Assumption}

\newcommand{\indep}{\perp\hskip -7pt \perp }

\footskip 3.5cm
\linespread{1.3}
\addtolength{\jot}{1em}
\newcounter{examplecounter}
\newenvironment{example}{%
   \refstepcounter{examplecounter}%
\textbf{Example \arabic{examplecounter}}%
\quad
}

\makeatletter
\newcommand{\addresseshere}{%
	\enddoc@text\let\enddoc@text\relax
}
\makeatother

\begin{document}

\title[Model misspecification and bias]{Model misspecification and bias for inverse probability weighting and doubly robust estimators}
\author[Ingeborg Waernbaum and Laura Pazzagli]{Ingeborg Waernbaum\textsuperscript{1} and Laura Pazzagli\textsuperscript{2}}
\address{\textsuperscript{1} Department of Statistics, USBE, Ume{\aa} University, Sweden \\
	 and Institute for Evaluation of Labour Market and Education Policy, IFAU, Uppsala, Sweden
	\\
	\textsuperscript{2} Division of Statistics, Department of Economics, University of Perugia, Italy\\}

\email{ingeborg.waernbaum@umu.se}

\begin{abstract}
	In the causal inference literature an estimator belonging to a class of semi-parametric estimators is called robust if it has desirable properties under the assumption that at least one of the working models is correctly specified. In this paper we propose a crude analytical approach to study the large sample bias of semi-parameteric estimators of the average causal effect when all working models are misspecified. We apply our approach to three prototypical estimators, two inverse probability weighting (IPW) estimators, using a misspecified propensity score model, and a doubly robust (DR) estimator, using misspecified models for the outcome regression and the propensity score. To analyze the question of when the use of two misspecified models are better than one we derive necessary and sufficient conditions for when the DR estimator has a smaller bias than a simple IPW estimator and when it has a smaller bias than an IPW estimator with normalized weights. 
	If the misspecificiation of the outcome model is moderate the comparisons of the biases of the IPW and DR estimators suggest that the DR estimator has a smaller bias than the IPW estimators. However, all biases include the PS-model error and we suggest that a researcher is careful when modeling the PS whenever such a model is involved. 
	
	\keywords{Average causal effects, comparing biases, outcome model, propensity scores}
\end{abstract}
\maketitle

\noindent Keywords: Average causal effects, comparing biases, outcome model, propensity scores\\
\addresseshere 
\newpage
\section{Introduction}
Identifying an average causal effect of a treatment with observational data requires adjustment for background variables that affect both the treatment and the outcome under study. Often parametric models are assumed for parts of the joint distribution of the treatment, outcome and background variables (covariates) and large sample properties of estimators are derived under the assumption that the parametric models are correctly specified. 

A class of semiparametric estimators called inverse probability weighting (IPW) estimators use the difference of the weighted means of the outcomes for the treatment groups as an estimator of the average causal effect, see e.g. \citeN{LD:04}. IPW estimators reweights the observed outcomes to a random sample of all potential outcomes, missing and observed, by letting each observed outcome account for itself and other individuals with similar characteristics proportionally to the probability of their outcome being observed. IPW estimators are used in applied literature \cite{kwon2015adjuvant} and their properties have been studied in the missing data and causal inference literature, see e.g. \citeN{vansteelandt2015analysis} and \citeN{seaman2013review} for reviews.

Earlier research have considered properties of IPW estimators for estimating the average causal effect under the  assumption that a parametric propensity score (PS) model is correctly specified \cite{LD:04,yao:10}. Properties of IPW estimators using different weights, often referred to as stabilized \cite{HBR:00,HR:06} or normalized \cite{HI:01,BDM:14} have been discussed together with the impact of violations to an assumption of overlap (\emph{positivity}) \cite{KT:10,petersen2010diagnosing}. 

To decrease the reliance on the choice of a parametric model of the PS an approach with doubly or multiply robust estimators has emerged, see the review by \citeN{seaman2018introduction} for an introduction to doubly robust (DR) estimators. An estimator is referred to as a DR estimator \cite{BR:05,AT:07} since it is a consistent estimator of the average causal effect if either the model for the propensity score or the outcome regression (OR) model is correct \cite{SRR:99}. The efficiency of the DR estimator is a key property and its variance has been described under correct specification of at least one of the models \cite{CTD:09,tan2010bounded}. When both models are correct the estimator reaches the semiparametric efficiency bound described in \citeN{RRZ:94}. The large sample properties of IPW estimators with standard, normalized and variance minimized weights, together with a prototypical DR estimator were studied and compared in \citeN{LD:04} under correct specification of the PS and OR models. Multiply robust estimators allow for several PS and OR models and are consistent for the true average treatment effect if any of the multiple
models is correctly specified \cite{han2013estimation}.

There are few studies on doubly or multiply robust estimators under misspecification of both (all) the PS and the OR models. \citeN{KS:07} studied and compared the performance of various DR and non-DR estimators under misspecification of both models. They concluded that many DR methods perform better than simple inverse probability weighting. However a regression-based estimator under a misspecified model was not improved upon. The paper was commented and the relevance of the results were discussed by several authors see e.g., \citeN{TD:07}, \citeN{Tan:07}, \citeN{robins2007comment}. In \citeN{IW:12} a matching estimator was compared to IPW and DR estimators under misspecification of both the PS and OR models. Here, a robustness class for the matching estimator under misspecification of the PS model was described.

In this paper we describe three commonly used semi-parametric estimators of the average causal effect under the assumption that none of the working models are correctly specified. For this purpose we study the difference between the probability limit of the estimator under model misspecification and the true average causal effect. The purpose of this definition of the bias is that the estimators under study converges to a well-defined limit however not necessarily consistent for the true average causal effect. We study the biases of two IPW estimators and a DR estimator and compare them under the same misspecification of the PS-model. In the comparisons with the DR estimator the biases provide a means to describe when two wrong models are better than one. To analyze the consequences of the model misspecifications we compare the absolute values of the biases in two parts separately, one for each of the two potential outcome means ($\mu_1,\mu_0$). For the comparisons we  provide sufficient and necessary conditions for inequalities involving the  absolute value of the biases of the different estimators. We use a running example of a data generating process with misspecified PS and OR models to illustrate the inequalities. A simulation study is performed to investigate the biases for finite samples. The data generating processes and the misspecified models from the simulation designs are also used for numerical approximations of the large sample properties derived in the paper.     

In recent studies strategies for bias reduction under model misspecification have been proposed by inclusion of additional conditions in the estimating equations for both IPW \cite {IR:14} and DR estimators \cite {VV:14}. However, the general approach for analyzing model misspecification provided in this paper could also be used to study the biased reduced estimators.

The paper proceeds as follows. Section \ref{theory} presents the model and theory together with the estimators and their properties  when the working models are correctly specified. Section \ref{general} presents a general approach and assumptions to study model misspecification. In Section \ref{BIAS} the generic biases are derived and comparisons between the estimators are performed. We present a simulation study in Section \ref{simulations} containing both finite sample properties of the estimators and numerical large sample approximations and thereafter we conclude with a discussion.   

\section{Model and theory}\label{theory}

The potential outcome framework defines a causal effect as a comparison of potential outcomes that would be observed under different treatments \cite {Rubin1974}. Let $X$ be a vector of pre-treatment variables, referred to as covariates, $T$ a binary treatment, with realized value $T=1$ if treated and $T=0$ if control. The causal effect of the treatment is defined as a contrast between two potential outcomes, for example the difference, $Y(1)-Y(0)$, where $Y(1)$ is the potential outcome under treatment and $Y(0)$ is the potential outcome under the control treatment. The observed outcome $Y$ is assumed to be the potential outcomes for each level of the observed treatment $Y=TY(1)+(1-T)Y(0)$, so that the data vector that we observe is $(T_i,X_i,Y_i)$, where $i=1,\ldots,n$ are assumed independent and identically distributed copies. In the remainder of the paper we will drop the subscript i for the random variables when not needed. Since each individual only can be subject to one treatment either $Y(1)$ or $Y(0)$ will be missing. If the treatment is randomized the difference of sample averages of the treated and controls will be an unbiased estimator of the average causal effect $\Delta=E\left[Y(1)-Y(0)\right]$, the parameter of interest. In the following we will use the notation $\mu_1=E\left[Y(1)\right]$ and $\mu_0=E\left[Y(0)\right]$.  When the treatment is not assigned at random the causal effect of the treatment can be estimated if all confounders are observed

\begin{assumption}\label{CIA}[No unmeasured confounding]\\
	$Y(t)\indep T|X, t=0,1$.
\end{assumption}

\noindent and if the treated and controls have overlapping covariate distributions  
\begin{assumption}\label{overlap}[Overlap]\\
	$\eta<P(T=1|X)<1-\eta$, for some $\eta>0$, 
\end{assumption}

\noindent where the assumption that $P(T=1|X)$ is bounded away from zero and one guarantees the existence of a consistent estimator \cite {KT:10}. Throughout the paper we assume that Assumptions \ref{CIA} and \ref{overlap} hold. Under these assumptions we can estimate the  average causal effect with the observed data by marginalizing over the conditional means 

\begin{equation}\label{marg}
\Delta=E\left[E(Y \mid X,T=1)-E(Y \mid X,T=0)\right].
\end{equation}

\noindent For  matching/stratification estimators, see \cite{Imbens2009} for a review, the inner expectation in (\ref{marg}) is evaluated by grouping treated and controls in matched pairs or strata formed by the resulting cells of the cross classification of the covariates. Instead of comparing treated and controls on a high dimensional vector of the covariates it is sufficient to condition on a scalar function of the covariates, $e(X)=P(T=1|X)$, called the propensity score \cite {RR:83}. Instead of conditioning on the propensity score as in (\ref{marg}), the propensity score can be used as a weight   

\begin{equation*}
\Delta =E\left[\frac{TY}{e(X)}-\frac{(1-T)Y}{1-e(X)}\right]=E\left[Y(1)-Y(0)\right],
\end{equation*} 

\noindent where the last equality follows from Assumption 1. 

Usually it is assumed that the propensity score and the outcome regression follow parametric models.
\begin{assumption}\label{PS}[Propensity score model]\\
	The propensity score $e(X)$ follows a model $e(X,\beta)$ parametrized by, $\beta=(\beta_1,\ldots, \beta_p)$ and $\hat{e}(X)$ is the estimated propensity score $e(X,\hat{\beta})$ with a $n^{1/2}$-consistent estimator of $\beta$
\end{assumption}

\begin{assumption}\label{OR}[Outcome regression model]\\
	The conditional expectation, $\mu_t(X)=E(Y(t)|X)$, $t=0,1$ follows a model $\mu_t(X,\alpha_t)$, $t=0,1$ parametrized by $\alpha_t=(\alpha_{t1},\ldots, \alpha_{tq_t})$ and $\hat{\mu}_t(X)$ is the estimated outcome regression $\mu_t(X,\hat{\alpha}_t)$ with a $n^{1/2}$-consistent estimator of $\alpha_t$.
\end{assumption}

Consider the example, $e(X,\beta)=[1+\exp(-X'\beta)]^{-1}$ and $\hat{e}(X)$ are the fitted values of the propensity score when $\hat{\beta}$ is a maximum likelihood estimator of $\beta$. Similarly the outcome regression model could be a linear model $\mu_t(X,\alpha_t)=X'\alpha_t$ where $\hat{\mu}_t(X)$, $t=0,1$ are the fitted values when $\hat{\alpha}_t$ is the ordinary least squares estimator. 

We study two IPW estimators and a DR estimator described in \citeN{LD:04}. We denote by $\hat{\Delta}_{\text{\tiny{IPW}}_1}$ an estimator defined by: 

\begin{equation}\label{f1} 
\hat{\Delta}_{\text{\tiny{IPW}}_1}=\frac{1}{n}\sum_{i=1}^n\frac{T_i{Y_i}}{\hat e(X_i)}-\frac{1}{n}\sum_{i=1}^n\frac{(1-T_i){Y_i}}{1-\hat e(X_i)}.
\end{equation}

\noindent The variance of $\hat{\Delta}_{\textrm{IPW}_1}$ is 

\begin{equation}\sigma_{\text{\tiny{IPW}}_1}^{2}=V_{\text{\tiny IPW}_1}-a^{T}I^{-1}a\end{equation}
\noindent where $V_{{\text{\tiny{IPW}}}_1}$ is the asymptotic variance when the propensity score is known
\begin{equation*}
V_{{\text{\tiny{IPW}}}_1}=  E\left[\frac{Y(1)^2}{e(X)}+\frac{Y(0)^2}{1-e(X)}\right]-(\mu_1-\mu_0)^2,
\end{equation*}
\noindent $a$ is a $(p\times 1)$ vector 
\begin{align*}\label{avector}
a=  E\left[\begin{array}{c}
\left(\dfrac{Y(1)}{e(X)}+\dfrac{Y(0)}{1-e(X)}\right)e'(X)
\end{array}\right]
\end{align*}

\noindent for the partial derivatives $e'(X)=\partial/\partial\beta\left\{e(X,\beta)\right\}$ and $I$ is the $p\times p$ covariance matrix of the estimated propensity score.  In $\hat{\Delta}_{\text{\tiny{IPW}}_1}$ each observed treated individual is weighted by $1/e(X)$ and each control is weighted by $1/\left[1-e(X)\right]$. Since the weights generate the missing potential outcomes for each of the treatment and control groups respectively we want to divide the weighted sum with the number of individuals in the generated sample consisting of the observed and missing potential outcomes which may not be $n$ for a given sample \cite{HIR:03}. This gives an IPW estimator $\hat{\Delta}_{\text{\tiny{IPW}}_2}$ with normalized weights   

\begin{equation}\label{f2} 
\hat{\Delta}_{\text{\tiny{IPW}}_2}=\left(\sum_{i=1}^n\frac{T_i}{\hat e(X_i)}\right)^{-1}\sum_{i=1}^n\frac{T_i{Y_i}}{\hat e(X_i)}-\left(\sum_{i=1}^n\frac{1-T_i}{1-\hat e(X_i)}\right)^{-1}\sum_{i=1}^n\frac{(1-T_i){Y_i}}{1-\hat e(X_i)},
\end{equation}

\noindent and variance

\begin{equation}\sigma_{\text{\tiny{IPW}}_2}^2=V_{\text{\tiny IPW}_2}-b^{T}I^{-1}b \end{equation}
\noindent where $V_{{\text{\tiny{IPW}}}_2}$ is the asymptotic variance when the propensity score is known
\begin{equation*}
V_{{\text{\tiny{IPW}}}_2}=  E\left[\dfrac{\left(Y(1)-\mu_1\right)^{2}}{e(X)}+\dfrac{\left(Y(0)-\mu_0\right)^{2}}{1-e(X)}\right],
\end{equation*}
\noindent and $b$ is a $(p\times 1)$ vector
\begin{align*}
b=  E\left[\begin{array}{c}
\left(\dfrac{Y(1)-\mu_1}{e(X)}+\dfrac{Y(0)-\mu_0}{1-e(X)}\right)e'(X)
\end{array}\right].
\end{align*}

Under Assumptions 1-3 the IPW estimators are consistent estimators of the average causal effect $\Delta$ with asymptotic distribution $\sqrt{n}(\hat{\Delta}_{\text{\tiny{IPW}}_k}-\Delta)\sim N(0,\sigma^2{\text{\tiny{IPW}}_k})$, $k=1,2$.
\vskip 0.5cm

\noindent In addition we study a DR estimator \cite {LD:04,AT:07}
\begin{align} \label{f5}
\nonumber \hat{\Delta}_{\text{\tiny{DR}}}=&\frac{1}{n}\sum_{i=1}^n\frac{T_i{Y_i}-(T_i-\hat e(X_i))\hat{\mu}_{1}(X_i)}{\hat e(X_i)}\\
-&\frac{1}{n}\sum_{i=1}^n\frac{(1-T_i){Y_i}+(T_i-\hat e(X_i))\hat{\mu}_{0}(X_i)}{1-\hat e(X_i)}.
\end{align}

Under Assumptions 1-4 we have the large sample distribution $\sqrt{n}(\hat{\Delta}_{\text{\tiny{DR}}}-\Delta)\sim N(0,\sigma_{\text{\tiny{DR}}}^{2})$ where

\begin{equation}
\sigma_{\text{\tiny{DR}}}^{2}=V_{{\text{\tiny{IPW}}}_2}-d,
\end{equation}
\noindent and
\begin{align*}\label{drb}
d= & E\left[\left(\sqrt{\dfrac{1-e(X)}{e(X)}}\left(\mu_1(X)-\mu_1\right)+\sqrt{\dfrac{e(X)}{1-e(X)}}\left(\mu_0(X)-\mu_0\right)\right)\right]^{2},
\end{align*}

\noindent with the property that $\sigma_{\text{\tiny{DR}}}^{2}\leq \sigma_{\text{\tiny{IPW}}_1}^2,\sigma_{\text{\tiny{IPW}}_2}^2$ which was shown by the theory of Robins and colleagues \cite {RRZ:94}.

\section{Model misspecification: a general approach}\label{general}

Our interest lies in the behaviors of the estimators when the propensity score and the outcome regression models are misspecified. For this purpose we replace Assumptions \ref{PS} and \ref{OR} with two other assumptions defining the probability limit of the estimators under a general misspecification. The misspecifications will further be used to define a general bias of the IPW and DR-estimators.  When the propensity score is misspecified an estimator, e.g., a quasi maximum likelihood estimator (QMLE) is not consistent for $\beta$ in Assumption \ref{PS}. However, a probability limit for an estimator under model misspecification exists under general conditions, see e.g. \citeN[Theorem 2.2]{HW:82} for QMLE or \citeN[Section 12.1]{wooldridge2010econometric} and \citeN[Theorem 7.1]{SB:13} for estimators that can be written as a solution of an estimating equation (M-estimators).

In the following, and as an alternative to Assumptions 3 and 4, we will assume that such limits exists. Below we define an estimator $\hat{e}^*(X)$ of the propensity score under a misspecified model $e^{\text{\tiny{mis}}}(X,\beta^*)$.

\begin{assumption}\label{FPS}[Misspecified PS model parameters] \\ 
	Let $\hat{\beta}^*$ be an estimator under model misspecification, $e^{\text{\tiny{mis}}}(X,\beta^*)$, then  $\hat{\beta}^*\stackrel{p}{\longrightarrow}\beta^*$.  
\end{assumption}

\noindent Under model misspecification the probability limit of $\hat{\beta^*}$ is generally well defined however $e^{\text{\tiny{mis}}}(X,\beta^*)$ is not equal to the propensity score $e(X)$. In the following we use the notation $\hat{e}^*(X)=e^{\text{\tiny{mis}}}(X,\hat{\beta}^*)$ as the estimated propensity score and $e^*(X)=e^{\text{\tiny{mis}}}(X,\beta^*)$ under Assumption \ref{FPS}. Below we give an example for true and misspecified parametric models, however, for Assumption \ref{FPS} we do not need the existence of a true parametric model.  
\vskip 0.5cm

\begin{example}\label{ex}
	For one confounder $X$ and a true PS model $e(X,\beta)=[1+\exp(-\beta_0-\beta_1X-\beta_2X^2)]^{-1}$ assume that we misspecify the propensity score with a probit model $e^{\text{\tiny{mis}}}(X,\beta^*)=\Psi(-\beta^*_0-\beta^*_1X)$, i.e., we misspecify the link function and omit a second order term. Let $\hat{\beta}^*=(\hat{\beta}^*_0,\hat{\beta}^*_1)$ be the QMLE estimator of the parameters in  $e^{\text{\tiny{mis}}}(X,\beta^*)$ obtained by maximizing the quasi-likelihood $$\ln \mathcal{L}=\sum_{i=1}^n\left(T_i\ln e^{\text{\tiny{mis}}}(X_i,\beta^*)+(1-T_i)\ln(1-e^{\text{\tiny{mis}}}(X_i,\beta^*))\right),$$ Then
	$\hat{e}^*(X)=\Psi(-\hat{\beta}^*_0-\hat{\beta}^*_1X)$, $\hat{\beta}^*=(\hat{\beta}^*_0,\hat{\beta}^*_1)\stackrel{p}{\longrightarrow}\beta^*=(\beta^*_0,\beta^*_1)$ under Assumption \ref{FPS} and $e^*(X)=\Psi(-\beta_0^*-\beta_1^*X)$ .  \\
\end{example} 

When considering the existence of true and misspecified parametric models, as illustrated in Example \ref{ex}, the parameters in $\beta$ and the limiting parameters $\beta^*$ under the misspecified model need not to be of the same dimension. For instance, the true model could contain higher order terms and interactions that are not present in the estimation model. 

The next assumption concerns overlap under model misspecification.

\begin{assumption}\label{overlap2}[Overlap under misspecification]\\
	$\nu<e^*(X)<1-\nu$, for some $\nu>0$.
\end{assumption}

In addition to the PS model we also consider misspecified outcome regression models, $\mu^{\text{\tiny{mis}}}_t(X,\alpha^*_t)$, $t=0,1$. Denote by $\hat{\alpha}^*_t$, $t=0,1$ the estimator of the parameters in $\mu^{\text{\tiny{mis}}}_t(X,\alpha^*_t)$. 

\begin{assumption}\label{FOR}[Misspecified OR model parameters]\\
	Let $\hat{\alpha}^*_t$ be an estimator under model misspecification $\mu^{\text{\tiny{mis}}}_t(X,\alpha^*_t)$, $t=0,1$, then $\hat{\alpha}^*_t\stackrel{p}{\longrightarrow}\alpha^*_t$, $t=0,1$.
\end{assumption}

In the following we use the notation $\hat{\mu}^*_t(X)=\mu^{\text{\tiny{mis}}}_t(X,\hat{\alpha}^*_t)$ as the estimated OR and $\mu^*_t(X)=\mu^{\text{\tiny{mis}}}_t(X,\alpha^*_t)$ under Assumption \ref{FOR} and $\mu^*_t$ for the expected value $E\left[\mu^*_t(X)\right]$, $t=0,1$.

Assumptions \ref{FPS} and \ref{FOR} are defined for misspecified PS and OR models for the purpose of describing their influence on the estimation of $\Delta$. The estimators (\ref{f1}), (\ref{f2}) and (\ref{f5}) can be written by estimating equations where the equations solving for the PS and OR parameters are set up below the main equation for the IPW and DR estimators, see e.g \citeN{LD:04} and \citeN{EW2014}. Assuming parametric PS and OR models the IPW estimators correspond to solving $2+p$ estimating equations $\sum_{i=1}^n\psi(\theta,Y_i, T_i, X_i)=0$ for the parameters $\theta_{\text{\tiny{IPW}}_k}=(\mu_1,\mu_0,\beta)$, $k=1,2$ and for the DR estimator $2+p+q_1+q_0$ estimating equations for the parameters $\theta_{\text{\tiny{DR}}}=(\mu_1,\mu_0,\beta,\alpha_1, \alpha_0)$. Using the notation for the misspecified models in Assumptions \ref{FPS} and \ref{FOR} the estimating equations change according to the dimensions of the parameters $\beta^*$ and $\alpha^*_t$, $t=0,1$. A key condition for Assumptions \ref{FPS} and and \ref{FOR} to hold is that the misspecification of the PS and/or OR provides estimating equations that uniquely define the parameter although, as a consequence of the misspecification, it will not be the true average causal effect. In the next section we present the asymptotic bias for the IPW and DR estimators under study with general expressions including the limits of the misspecified propensity score and outcome regression models.

\section{Bias resulting from model misspecification}\label{BIAS}

\subsection{General biases}\label{bias.sec}
In order to study the large sample bias of $\hat{\Delta}_{\text{\tiny{IPW}}_1}$, $\hat{\Delta}_{\text{\tiny{IPW}}_2}$ and $\hat{\Delta}_{\text{\tiny{DR}}}$ under model misspecification we define the estimators  $\hat{\Delta}^*_{\text{\tiny{IPW}}_1}$, $\hat{\Delta}^*_{\text{\tiny{IPW}}_2}$ and $\hat{\Delta}^*_{\text{\tiny{DR}}}$  by replacing $\hat{e}(X)$ in Equations (\ref{f1}), (\ref{f2}) and (\ref{f5}) with $\hat{e}^*(X)$. For the DR-estimator we additionally replace $\hat{\mu}_t(X)$ with $\hat{\mu}^*_t(X)$, $t=0,1$. 

To assess the properties of the estimators we assume 1, 2, 5, 6 and 7 and regularity conditions for applying a weak law of large numbers for averages with estimated parameters, see Appendix \ref{App:AppendixA}. Note that Assumption \ref{PS} and \ref{OR} are no longer needed. We evaluate the difference between the probability limits of the estimators under model misspecification and the average causal effect $\Delta$ for the IPW and DR estimators:

\begin{theorem}[Bias under model misspecification for $\hat{\Delta}_{\text{\tiny{IPW}}_1}^*$] \label{biasIPW1} Under Assumptions 1-2 and 5-6 
	\begin{equation*}
	\hat{\Delta}_{\text{\tiny{IPW}}_1}^*-\Delta\stackrel{p}{\longrightarrow}E\left[\frac{e(X)}{e^*(X)}\mu_1(X)\right]
	-E\left[\frac{1-e(X)}{1-e^*(X)}\mu_0(X)\right]-(\mu_1-\mu_0).
	\end{equation*}
\end{theorem}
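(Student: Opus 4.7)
The plan is to establish the probability limit of each of the two sample averages in $\hat{\Delta}_{\text{\tiny{IPW}}_1}^*$ separately, and then subtract $\Delta=\mu_1-\mu_0$ to read off the bias. I will use the standard two-step argument for averages involving an estimated nuisance parameter: first replace $\hat{\beta}^*$ by its limit $\beta^*$, then apply a weak law of large numbers to the resulting i.i.d.\ average.

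First I would argue that under Assumption~\ref{FPS}, the continuous mapping theorem together with differentiability of $e^{\text{\tiny{mis}}}(x,\cdot)$ gives $\hat e^*(X_i)\stackrel{p}{\to}e^*(X_i)$ pointwise. Combined with Assumption~\ref{overlap2}, which keeps $e^*(X)$ (and, with probability tending to one, $\hat e^*(X)$) bounded away from $0$ and $1$, the integrands $T_iY_i/\hat e^*(X_i)$ and $(1-T_i)Y_i/\{1-\hat e^*(X_i)\}$ are uniformly dominated near $\beta^*$. The regularity conditions in Appendix~\ref{App:AppendixA} (a uniform law of large numbers for averages with estimated parameters, e.g.\ a Glivenko--Cantelli-type argument or the standard M-estimator machinery in \citeN[Section 12.1]{wooldridge2010econometric}) then yield
\begin{equation*}
\frac{1}{n}\sum_{i=1}^n\frac{T_iY_i}{\hat e^*(X_i)}\stackrel{p}{\longrightarrow}E\!\left[\frac{TY}{e^*(X)}\right],\qquad
\frac{1}{n}\sum_{i=1}^n\frac{(1-T_i)Y_i}{1-\hat e^*(X_i)}\stackrel{p}{\longrightarrow}E\!\left[\frac{(1-T)Y}{1-e^*(X)}\right].
\end{equation*}

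Second, I would reduce each population expectation to the stated form using the potential outcomes representation $Y=TY(1)+(1-T)Y(0)$, so $TY=TY(1)$ and $(1-T)Y=(1-T)Y(0)$, together with Assumption~\ref{CIA}. Conditioning on $X$ and using $Y(1)\indep T\mid X$ gives
\begin{equation*}
E\!\left[\frac{TY(1)}{e^*(X)}\right]=E\!\left[\frac{E(T\mid X)\,E(Y(1)\mid X)}{e^*(X)}\right]=E\!\left[\frac{e(X)}{e^*(X)}\mu_1(X)\right],
\end{equation*}
and analogously $E\!\left[(1-T)Y(0)/\{1-e^*(X)\}\right]=E\!\left[(1-e(X))\mu_0(X)/\{1-e^*(X)\}\right]$. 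Subtracting $\Delta=\mu_1-\mu_0$ from the difference of these two limits delivers the stated expression.

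The only delicate step is the first one: justifying that replacing $\hat\beta^*$ by $\beta^*$ inside the averages is legitimate. The ratio form of the summands means that, without Assumption~\ref{overlap2}, arbitrarily small denominators could blow up the expectation and obstruct uniform convergence; with that assumption, however, one obtains a compact neighborhood of $\beta^*$ on which the mappings $\beta\mapsto 1/e^{\text{\tiny{mis}}}(x,\beta)$ and $\beta\mapsto 1/\{1-e^{\text{\tiny{mis}}}(x,\beta)\}$ are continuous and bounded, and a standard dominated-convergence/ULLN argument closes the gap. The rest of the proof is algebraic manipulation via iterated expectations and Assumption~\ref{CIA}, and requires no further ideas.
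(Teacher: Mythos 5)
Your proposal is correct and follows essentially the same route as the paper's proof in Appendix B: a weak law of large numbers for averages with estimated nuisance parameters (justified by the regularity conditions and Assumptions 5--6) to replace $\hat e^*(X)$ by $e^*(X)$, followed by iterated expectations and Assumption 1 to rewrite $E\left[TY/e^*(X)\right]$ as $E\left[e(X)\mu_1(X)/e^*(X)\right]$ and its analogue for the control arm, then subtraction of $\Delta$. Your discussion of why Assumption 6 is needed to control the denominators simply makes explicit what the paper delegates to its regularity conditions.
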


\begin{theorem}[Bias under model misspecification for $\hat{\Delta}_{\text{\tiny{IPW}}_2}^*$]\label{biasIPW2} Under Assumptions 1-2 and 5-6 
	\begin{equation*} 
	\hat{\Delta}_{\text{\tiny{IPW}}_2}^*-\Delta\stackrel{p}{\longrightarrow}\\
	\frac{E\left[\frac{e(X)}{e^*(X)}\mu_1(X)\right]}{E\left[\frac {e(X)}{e^*(X)}\right]}
	-\frac{E\left[\frac{1-e(X)}{1-e^*(X)}\mu_0(X)\right]}{E\left[\frac {1-e(X)}{1-e^*(X)}\right]}-(\mu_1-\mu_0).
	\end{equation*}
\end{theorem}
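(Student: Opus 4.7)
The plan is to mimic the proof of Theorem~\ref{biasIPW1} but with an extra Slutsky/continuous-mapping step to handle the normalizing denominators. Write
\begin{equation*}
\hat{\Delta}_{\text{\tiny{IPW}}_2}^* \;=\; \frac{A_n^{(1)}}{B_n^{(1)}} \;-\; \frac{A_n^{(0)}}{B_n^{(0)}},
\end{equation*}
where $A_n^{(1)}=n^{-1}\sum_i T_iY_i/\hat e^*(X_i)$, $B_n^{(1)}=n^{-1}\sum_i T_i/\hat e^*(X_i)$, and the analogous quantities with $(1-T_i)$ and $1-\hat e^*(X_i)$ define $A_n^{(0)},B_n^{(0)}$. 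Each piece has exactly the form covered by the weak law of large numbers for averages with an estimated nuisance parameter invoked in the proof of Theorem~\ref{biasIPW1}, with Assumption~\ref{FPS} supplying $\hat\beta^*\stackrel{p}{\to}\beta^*$ and Assumption~\ref{overlap2} keeping the denominators bounded away from zero.

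First I would show, by exactly the uniform-convergence argument referenced in Appendix~A (continuity of $e^{\text{\tiny mis}}(x,\cdot)$ combined with Assumptions~\ref{FPS} and \ref{overlap2}), that
\begin{equation*}
A_n^{(1)}\stackrel{p}{\longrightarrow} E\!\left[\frac{TY}{e^*(X)}\right],\qquad B_n^{(1)}\stackrel{p}{\longrightarrow} E\!\left[\frac{T}{e^*(X)}\right],
\end{equation*}
and the analogous limits for $A_n^{(0)},B_n^{(0)}$. Then I would reduce each of these expectations to a functional of $(\mu_1(X),\mu_0(X),e(X),e^*(X))$ via iterated expectations, using $TY=TY(1)$, $(1-T)Y=(1-T)Y(0)$, and Assumption~\ref{CIA} (no unmeasured confounding). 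For instance,
\begin{equation*}
E\!\left[\frac{TY}{e^*(X)}\right]=E\!\left[\frac{E[T\mid X]\,E[Y(1)\mid X]}{e^*(X)}\right]=E\!\left[\frac{e(X)}{e^*(X)}\mu_1(X)\right],
\end{equation*}
and $E[T/e^*(X)]=E[e(X)/e^*(X)]$, with the obvious mirror for the control arm.

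Next, because Assumptions~\ref{overlap} and \ref{overlap2} guarantee $E[e(X)/e^*(X)]$ and $E[(1-e(X))/(1-e^*(X))]$ are strictly positive and finite, I would apply the continuous mapping theorem (Slutsky) to the pair $(A_n^{(t)},B_n^{(t)})$, $t=0,1$, obtaining
\begin{equation*}
\frac{A_n^{(1)}}{B_n^{(1)}}-\frac{A_n^{(0)}}{B_n^{(0)}}\stackrel{p}{\longrightarrow}\frac{E\!\left[\frac{e(X)}{e^*(X)}\mu_1(X)\right]}{E\!\left[\frac{e(X)}{e^*(X)}\right]}-\frac{E\!\left[\frac{1-e(X)}{1-e^*(X)}\mu_0(X)\right]}{E\!\left[\frac{1-e(X)}{1-e^*(X)}\right]}.
\end{equation*}
Subtracting $\Delta=\mu_1-\mu_0$ on both sides yields the stated limit of $\hat{\Delta}_{\text{\tiny{IPW}}_2}^*-\Delta$.

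The main obstacle is the first step: justifying that replacing $\hat e^*(X_i)$ by $e^*(X_i)$ inside the averages is asymptotically harmless when $\hat\beta^*$ is only an M-estimator under a misspecified model. This is handled by the uniform law of large numbers argument from Appendix~A combined with Assumption~\ref{overlap2}, which bounds the integrands by an integrable envelope on a neighborhood of $\beta^*$; given that infrastructure, every other step (iterated expectations under CIA, and Slutsky for the ratios) is routine and essentially identical to the reasoning used for Theorem~\ref{biasIPW1}.
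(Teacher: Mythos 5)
Your proposal is correct and follows essentially the same route as the paper: the appendix proves Theorem~\ref{biasIPW1} via the WLLN for averages with estimated parameters plus iterated expectations under Assumption~\ref{CIA}, and states that Theorem~\ref{biasIPW2} is ``derived similarly,'' which is precisely your decomposition into ratios of averages $A_n^{(t)}/B_n^{(t)}$ with a Slutsky/continuous-mapping step justified by Assumptions~\ref{overlap} and \ref{overlap2}. The only addition you make is spelling out that extra ratio step explicitly, which the paper leaves implicit.
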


\begin{theorem}[Bias under model misspecification for $\hat{\Delta}_{\text{\tiny{DR}}}^*$] \label{biasDR} Under Assumptions 1-2 and 5-7
	\begin{align*}
	\hat{\Delta}_{\text{\tiny{DR}}}^*-\Delta\stackrel{p}{\longrightarrow}&E\left[\frac{\left(e(X)-e^*(X)\right)\left(\mu_1(X)-{\mu}^*_{1}(X)\right)}{e^*(X)}\right]\nonumber\\
	&+E\left[\frac{\left[e(X)
		-e^*(X)\right]\left(\mu_{0}(X)-\mu^*_0(X)\right)}{\left(1-e^*(X)\right)}\right].
	\end{align*}	
\end{theorem}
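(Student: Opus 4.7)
The plan is to take probability limits term by term in the two averages defining $\hat{\Delta}^*_{\text{\tiny{DR}}}$, then perform an algebraic rearrangement that exploits the identity $e(X)=e^*(X)+(e(X)-e^*(X))$ to obtain the claimed ``product of errors'' form. Under Assumption~\ref{overlap2} the denominators $e^*(X)$ and $1-e^*(X)$ are bounded away from zero, so with the regularity conditions of Appendix~\ref{App:AppendixA} a uniform weak law of large numbers (continuous-mapping plus Assumptions~\ref{FPS} and~\ref{FOR}) shows that the two sample averages converge in probability to their population analogues evaluated at $(e^*,\mu_0^*,\mu_1^*)$.

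Next I would treat the two halves separately. For the ``treated'' half, the probability limit is
\begin{equation*}
E\!\left[\frac{TY}{e^*(X)}\right]-E\!\left[\frac{(T-e^*(X))\mu_1^*(X)}{e^*(X)}\right].
\end{equation*}
Using $TY=TY(1)$, iterated expectations and Assumption~\ref{CIA} give $E[TY\mid X]=e(X)\mu_1(X)$ and $E[T\mid X]=e(X)$, so this equals
\begin{equation*}
E\!\left[\frac{e(X)\mu_1(X)}{e^*(X)}\right]-E\!\left[\frac{(e(X)-e^*(X))\mu_1^*(X)}{e^*(X)}\right].
\end{equation*}
The key algebraic step is to write $e(X)=e^*(X)+(e(X)-e^*(X))$ in the first expectation, which splits it into $\mu_1=E[\mu_1(X)]$ plus $E[(e(X)-e^*(X))\mu_1(X)/e^*(X)]$; combining with the second expectation yields
\begin{equation*}
\mu_1+E\!\left[\frac{(e(X)-e^*(X))(\mu_1(X)-\mu_1^*(X))}{e^*(X)}\right].
\end{equation*}

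The ``control'' half is handled by an analogous argument, using $1-e(X)=(1-e^*(X))-(e(X)-e^*(X))$ to produce
\begin{equation*}
\mu_0-E\!\left[\frac{(e(X)-e^*(X))(\mu_0(X)-\mu_0^*(X))}{1-e^*(X)}\right].
\end{equation*}
Subtracting the control limit from the treated limit and then subtracting $\Delta=\mu_1-\mu_0$ gives the statement of the theorem.

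The main obstacle is not the algebra but the analytic step of justifying that the sample means in \eqref{f5} converge to their population analogues evaluated at the probability limits $(\beta^*,\alpha_0^*,\alpha_1^*)$; this requires a uniform law of large numbers applicable under Assumptions~\ref{FPS}, \ref{overlap2} and~\ref{FOR}, and I would invoke the regularity conditions set out in Appendix~\ref{App:AppendixA} together with Slutsky's theorem. Once that convergence is in place, the rest is the two-line rewrite above; the pleasant cancellation between $TY/e^*(X)$ and $(T-e^*(X))\mu_1^*(X)/e^*(X)$ (and its control counterpart) is precisely the double-robustness signature, now expressed as a product of PS and OR errors rather than vanishing under correct specification of one model.
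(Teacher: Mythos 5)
Your proposal is correct and follows the same route the paper sketches: the appendix proves the IPW$_1$ case via a weak law of large numbers under the regularity conditions plus iterated expectations using Assumption \ref{CIA}, and states that the DR bias is ``derived similarly,'' which is exactly your argument of taking probability limits at $(\beta^*,\alpha_0^*,\alpha_1^*)$ and then rearranging with $e(X)=e^*(X)+(e(X)-e^*(X))$ and $1-e(X)=(1-e^*(X))-(e(X)-e^*(X))$ to reach the product-of-errors form. The algebra in both halves checks out, so no gaps to report.
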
 	

See Appendix \ref{app1b} for proofs. 
\vskip 0.35cm

We refer to the limits in Theorem \ref{biasIPW1}, \ref{biasIPW2},  and \ref{biasDR} as the asymptotic biases of the respective estimators, i.e., Bias$(\hat{\Delta}_{\text{\tiny{IPW}}_1}^*)$, Bias$(\hat{\Delta}_{\text{\tiny{IPW}}_2}^*)$ and Bias$(\hat{\Delta}_{\text{\tiny{DR}}}^*)$ although they are the difference between the probablity limits of the estimators and the true $\Delta$ and not the difference in expectations. The double robustness property of $\hat{\Delta}_{\text{\tiny{DR}}}^*$ is displayed by Theorem \ref{biasDR} since if either $e(X)=e^*(X)$ or $\mu_t(X)=\mu_t^*(X)$, $t=0,1$ we have that $\hat{\Delta}_{\text{\tiny{DR}}}^*\stackrel{p}{\longrightarrow}\Delta$. 

To provide an illustrative example of the biases of the estimators we obtain the misspecified models' limits by misspecifying the link functions in generalized linear models. However, other data generating processes under Assumptions 1-2, 5-7 could also be used. For the propensity score we use binary response models with logit link (true) and a complementary loglog link (misspecified), for the outcome regression models we use poisson models with log links (true) and gaussian models (misspecified) with identity links. We use numerical approximations to provide values on the parameters in $e^*(X)$ and $\mu^*_t(X)$, $t=0,1$ under the given true and misspecified models $e(X)$, $e^{\text{\tiny{mis}}}(X)$, $\mu_t(X)$ and $\mu^{\text{\tiny{mis}}}_t(X)$, $t=0,1$. \\

\begin{example}\label{ex2}[Bias from link misspecifications]\\
	Let $X\sim\text{Uniform}(-2,2)$ and $T\sim{Bernoulli}(e(X))$. Assume that
	\begin{align*}
	& e(X)=\left[{1+\exp{(0.5-X)}}\right]^{-1}, \quad  e^*(X)=1-\exp\left[-\exp(-0.81+0.74X)\right],\\
	& \mu _1(X)=\exp{(2.3+0.14X)},\quad \mu _0(X)=\exp{(1.4+0.20X)}, \\ 
	&\mu^* _1(X)=10.06+1.48X,\quad \mu^*_0(X)=4.14+0.79X. 
	\end{align*}
	The marginal means are $\mu _1=10.11$ and $\mu _0=4.16$, so that $\Delta=5.94$. Here we have that, Bias$(\hat{\Delta}_{\text{\tiny{IPW}}_1}^*)=-0.16$, Bias$(\hat{\Delta}_{\text{\tiny{IPW}}_2}^*)=0.05$, and Bias$(\hat{\Delta}_{\text{\tiny{DR}}}^*)=-0.02$
\end{example}

\subsection{Comparisons}

To analyze the impact of the model misspecification on the estimators' biases in Section \ref{bias.sec} we compare the biases for two parts separately. The first part concerns the bias with respect to $\mu_1$ and the second part with respect to $\mu_0$.  The first part of Bias$(\hat{\Delta}_{\text{\tiny{IPW}}_1}^*)$ in Theorem \ref{biasIPW1} is  

\begin{align}\label{ipw1}
E\left[\frac{e(X)}{e^*(X)}\mu_1(X)\right]-\mu_1&=cov\left[\frac{e(X)}{e^*(X)},\mu_1(X)\right]+E\left[\frac{e(X)}{e^*(X)}-1\right]\mu_1,
\end{align}

\noindent and the first part of  Bias$(\hat{\Delta}_{\text{\tiny{IPW}}_2}^*)$ in Theorem \ref{biasIPW2} is

\begin{align}\label{ipw2}
\frac{E\left[\frac{e(X)}{e^*(X)}\mu_1(X)\right]}{E\left[\frac{e(X)}{e^*(X)}\right]}-\mu_1= \frac{cov\left[\frac{e(X)}{e^*(X)},\mu_1(X)\right]}{E\left[\frac{e(X)}{e^*(X)}\right]}.
\end{align}   

\noindent For $\hat{\Delta}_{\text{\tiny{IPW}}_1}^*$ we see in (\ref{ipw1}) that the mean difference between the expected value of the conditional outcome, scaled with $E[e(X)/e^*(X)]$, and the marginal outcome contributes to the bias. For $\hat{\Delta}_{\text{\tiny{IPW}}_2}^*$ the contribution (\ref{ipw2}) is the difference between the conditional and the marginal outcome with the same error scaling, but here, the expected value of  $E\left[e(X)/e^*(X)\right]$ also enters the bias in the denominator. For $\hat{\Delta}_{\text{\tiny{IPW}}_1}^*$ we see from the right hand side of (\ref{ipw1}) that the sign depends on the covariance of $e(X)/e^*(X)$ and $\mu_1(X)$, and the sign of the product of $E\left[e(X)/e^*(X)-1\right]$ and $\mu_1$. For $\hat{\Delta}_{\text{\tiny{IPW}}_2}^*$ we see from the right hand side of (\ref{ipw2}) that the sign of depends on the covariance only. Hence, the part of the biases described above can be in different directions for the same model misspecification, see also Example \ref{ex2} for the bias in total. It is no surprise that the covariance of  $e(X)/e^*(X)$ and $\mu_1(X)$  (and similarly of $\left[1-e(X)\right]/\left[1-e^*(X)\right]$  and $\mu_0(X)$) plays a role for the bias of the estimators. If $\mu_1(X)$ was a constant it could be taken out of the expectations of the first terms in (\ref{1}) and (\ref{2}) and the PS-model ratio, $e(X)/e^*(X)$, would be cancelled by the denominator $E\left[e(X)/e^*(X)\right]$. In this case the bias for $\hat{\Delta}_{\text{\tiny{IPW}}_2}^*$ would be 0, and thus smaller than the bias of $\hat{\Delta}_{\text{\tiny{IPW}}_1}^*$ . 

In the sequel we will give results concerning the absolute values of the first part of the biases in Theorems \ref{biasIPW1}, \ref{biasIPW2} and \ref{biasDR} but the results can be directly applied for the second part of the biases by replacing $e(X)/e^*(X)$, with $(1-e(X))/(1-e^*(X))$ and $\mu_1(X)$ with $\mu_0(X)$, see Appendix \ref{app2}. We define $\text{Bias}_1(\hat{\Delta}_{\text{\tiny{IPW}}_1}^*)$, $\text{Bias}_1(\hat{\Delta}_{\text{\tiny{IPW}}_2}^*)$ and $\text{Bias}_1(\hat{\Delta}_{\text{\tiny{DR}}}^*)$ as

\begin{align}
&\text{Bias}_1(\hat{\Delta}_{\text{\tiny{IPW}}_1}^*)=E\left[\frac{e(X)}{e^*(X)}\mu_1(X)\right]
-\mu_1,\label{1}\\
&\text{Bias}_1(\hat{\Delta}_{\text{\tiny{IPW}}_2}^*)=\frac{E\left[\frac{e(X)}{e^*(X)}\mu_1(X)\right]}{E\left[\frac{e(X)}{e^*(X)}\right]}-\mu_1,\label{2}\\
&\text{Bias}_1(\hat{\Delta}_{\text{\tiny{DR}}}^*)=  E\left[\left(\frac{e(X)}{e^*(X)}-1\right)\left(\mu_1(X)-{\mu}^*_{1}(X)\right)\right].\label{3}
\end{align}

We investigate the difference between the bias of the IPW estimators \eqref{1}, \eqref{2} and the bias of the DR estimator \eqref{3}. Hence, we analyze the question of when two wrong models are better than one. We start by comparing Bias$_1(\hat{\Delta}_{\text{\tiny{DR}}}^*)$ and Bias$_1(\hat{\Delta}_{\text{\tiny{IPW}}_1}^*)$. In the following theorem we show a necessary condition for Bias$_1(\hat{\Delta}_{\text{\tiny{DR}}}^*)$ to be smaller than Bias$_1(\hat{\Delta}_{\text{\tiny{IPW}}_1}^*)$. In the sequel all proofs are provided in Appendix \ref{app2}.

\begin{theorem}[Necessary condition for Bias$_1(\hat{\Delta}_{\text{\tiny{DR}}}^*)$ smaller than Bias$_1(\hat{\Delta}_{\text{\tiny{IPW}}_1}^*)$]\label{dr2} 
	If
	\begin{equation*}
	\left| E\left[\left(\frac{e(X)}{e^*(X)}-1\right)\left(\mu_1(X)-{\mu}^*_{1}(X)\right)\right]\right|<  	\left| E\left[\left(\frac{e(X)}{e^*(X)}-1\right)\mu_1(X)\right]\right|,
	\end{equation*}
	then
	\begin{equation*}
	\left|E\left[\left(\frac{e(X)}{e^*(X)}-1\right){\mu}^*_{1}(X)\right]\right|<  2\cdot\left| E\left[\left(\frac{e(X)}{e^*(X)}-1\right)\mu_1(X)\right] \right|.
	\end{equation*}
	
\end{theorem}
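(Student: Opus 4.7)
The plan is to exploit a one-line additive decomposition of the integrand and then apply the triangle inequality, nothing more. First, I would observe that the three expectations in the statement share the common weight $\bigl(e(X)/e^*(X)-1\bigr)$ and that $\mu^*_1(X) = \mu_1(X) - \bigl(\mu_1(X)-\mu^*_1(X)\bigr)$ holds pointwise in $X$. Multiplying through by the common weight and taking expectations, linearity gives the identity
\begin{equation*}
E\!\left[\left(\frac{e(X)}{e^*(X)}-1\right)\mu^*_1(X)\right] = E\!\left[\left(\frac{e(X)}{e^*(X)}-1\right)\mu_1(X)\right] - E\!\left[\left(\frac{e(X)}{e^*(X)}-1\right)\bigl(\mu_1(X)-\mu^*_1(X)\bigr)\right].
\end{equation*}

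Next, I would apply the triangle inequality $|a-b|\le|a|+|b|$ to the right-hand side of this identity. This bounds $\bigl|E[(e(X)/e^*(X)-1)\mu^*_1(X)]\bigr|$ above by the sum of the two absolute values $\bigl|E[(e(X)/e^*(X)-1)\mu_1(X)]\bigr|$ and $\bigl|E[(e(X)/e^*(X)-1)(\mu_1(X)-\mu^*_1(X))]\bigr|$. Finally, the hypothesis of the theorem says precisely that the DR-related second summand is strictly smaller than the IPW$_1$-related first summand, so the sum is strictly bounded above by twice the first summand, which is the conclusion.

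I do not anticipate any real obstacle: every step is a one-line algebraic identity, linearity of expectation, or the triangle inequality. The only delicate point is propagating strictness, namely that the non-strict triangle inequality combined with a strict hypothesis should still yield a strict final bound; this is straightforward because strictness enters through exactly the summand that the hypothesis controls. The same scheme will recycle verbatim to give the analogous inequality for the $\mu_0$-part, simply by swapping $e(X)/e^*(X)$ for $(1-e(X))/(1-e^*(X))$ and $\mu_1(X),\mu^*_1(X)$ for $\mu_0(X),\mu^*_0(X)$, as the paper anticipates in Section \ref{BIAS}.
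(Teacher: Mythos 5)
Your proof is correct. Writing $A=E\left[\left(\frac{e(X)}{e^*(X)}-1\right)\mu_1(X)\right]$, $B=E\left[\left(\frac{e(X)}{e^*(X)}-1\right)\mu_1^*(X)\right]$ and $D=E\left[\left(\frac{e(X)}{e^*(X)}-1\right)\left(\mu_1(X)-\mu_1^*(X)\right)\right]=A-B$, your argument is just $|B|=|A-D|\le |A|+|D|<2|A|$, with strictness supplied by the hypothesis $|D|<|A|$; that is airtight, and your remark about propagating strictness is handled correctly. The paper reaches the same conclusion from the same underlying identity $B=A-D$, but by a different execution: it splits into the cases $A>0$ and $A<0$, unpacks $|D|<|A|$ into the two-sided inequality $-|A|<A-B<|A|$, and rearranges to obtain $0<B<2A$ in the first case and $2A<B<0$ in the second. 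Your triangle-inequality route is shorter and case-free; the paper's case analysis buys a little more, namely that $B$ must share the sign of $A$, which is exactly the extra same-sign requirement that reappears in the sufficient condition of Theorem \ref{dr3} b). (Incidentally, the paper's own proof states its final display with $\le$, while your handling gives the strict $<$ asserted in the theorem.) Your closing comment that the argument transfers verbatim to the $\mu_0$-part by the substitutions indicated in Appendix \ref{app2} is also accurate.
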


The theorem states that if the DR estimator improves upon the simple IPW-estimator under misspecification of both the PS and the OR model, then, the absolute value of the misspecified outcome model is less than double the absolute value of the true conditional mean under the same scaling of the PS-model error, $e(X)/e^*(X)-1$.  
\vskip 0.5cm

\newcounter{revcounter}
\newenvironment{rev}{%
	\refstepcounter{revcounter}%
	\textbf{Example 2.\arabic{revcounter}}%
	\quad
}

\begin{rev}[Numerical example 1 revisited for Bias$_1(\hat{\Delta}_{\text{\tiny{DR}}}^*)$ and Bias$_1(\hat{\Delta}_{\text{\tiny{IPW}}_1}^*)$]\\
	For the data generating process in Example 2 we investigate the first part of the bias
	
	\begin{align*}
	\left| E\left[\left(\frac{e(X)}{e^*(X)}-1\right)\left(\mu_1(X)-{\mu}^*_{1}(X)\right)\right]\right|&<  	
	\left| E\left[\left(\frac{e(X)}{e^*(X)}-1\right){\mu}_{1}(X)\right]\right|\\
	0.01<0.11
	\end{align*}
	
	implies
	\begin{align*}
	\left|E\left[\left(\frac{e(X)}{e^*(X)}-1\right){\mu}_{1}^*(X)\right]\right|&< 2\cdot\left| E\left[\left(\frac{e(X)}{e^*(X)}-1\right){\mu}_{1}(X)\right] \right|\\
	0.10&<0.22
	\end{align*}

	\noindent which is consistent with Theorem \ref{dr2}. 
	
\end{rev}

Below we give two examples of sufficient conditions for the DR-estimator to have a smaller bias than the simple IPW estimator.

\begin{theorem}[Sufficient conditions for Bias$(\hat{\Delta}_{\text{\tiny{DR}}}^*)$ smaller than Bias$(\hat{\Delta}_{\text{\tiny{IPW}}_1}^*)$]\label{dr3} 
	If 
	a) $\mu_1^*=\mu_1$ and $0<E\left[\frac{e(X)}{e^*(X)}-1\right] \mu_1<cov\left[\frac{e(X)}{e^*(X)},\mu_1^*(X)\right]<cov\left[\frac{e(X)}{e^*(X)},\mu_1(X)\right]$\\
	or, 
	
	b)    	 \begin{equation*}
	\left|E\left[\left(\frac{e(X)}{e^*(X)}-1\right){\mu}^*_{1}(X)\right]\right|<  2\cdot\left| E\left[\left(\frac{e(X)}{e^*(X)}-1\right)\mu_1(X)\right] \right|,
	\end{equation*}
	and $E\left[\left(\frac{e(X)}{e^*(X)}-1\right){\mu}^*_{1}(X)\right]$ and $E\left[\left(\frac{e(X)}{e^*(X)}-1\right){\mu}_{1}(X)\right]$ are either both positive or both negative, then
	
	\begin{equation*}
	\left| E\left[\left(\frac{e(X)}{e^*(X)}-1\right)\left(\mu_1(X)-{\mu}^*_{1}(X)\right)\right]\right|<  	\left| E\left[\left(\frac{e(X)}{e^*(X)}-1\right)\mu_1(X)\right]\right|.
	\end{equation*}
	
\end{theorem}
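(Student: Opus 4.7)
The plan is to reduce the claim to a simple two-variable inequality and then handle parts (a) and (b) separately. Write $W(X) = e(X)/e^*(X) - 1$, $A = E[W(X)\mu_1(X)]$ and $B = E[W(X)\mu_1^*(X)]$. The desired conclusion is then
\[
|A - B| < |A|,
\]
while the right-hand side of each hypothesis controls $A$ and $B$ in a different way. This change of variables is the main simplification; after it, everything is elementary algebra on real numbers.

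For part (b), the hypothesis is $|B| < 2|A|$ together with $\operatorname{sign}(A) = \operatorname{sign}(B)$. I would split into the case $A,B > 0$ and the case $A,B < 0$. In the positive case, consider the subcases $B \le A$ and $B > A$: the first gives $|A-B| = A - B < A = |A|$ because $B>0$, and the second gives $|A-B| = B - A < 2A - A = A = |A|$ using $B < 2A$. The negative case is symmetric (apply the positive case to $-A$ and $-B$). This exhausts (b).

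For part (a), I would use the covariance decomposition
\[
A = \operatorname{cov}\!\left(\tfrac{e(X)}{e^*(X)},\mu_1(X)\right) + E[W(X)]\,\mu_1,
\qquad
B = \operatorname{cov}\!\left(\tfrac{e(X)}{e^*(X)},\mu_1^*(X)\right) + E[W(X)]\,\mu_1^*,
\]
together with $\mu_1 = \mu_1^*$ to align the second terms. The chain $0 < E[W]\mu_1 < \operatorname{cov}(e/e^*,\mu_1^*(X)) < \operatorname{cov}(e/e^*,\mu_1(X))$ then yields $A > B > 0$, so $|A-B| = A - B < A = |A|$, again using positivity of $B$. Combining the two parts gives the theorem.

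The only real subtlety is keeping the sign bookkeeping clean in part (b); this is the step most prone to off-by-one errors, since ``both positive or both negative'' needs to be handled in a way that makes the factor $2$ on the right-hand side of the hypothesis both necessary (for $B>A>0$) and sufficient. Everything else follows by substitution and linearity of expectation.
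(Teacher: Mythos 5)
Your proof is correct and follows essentially the same route as the paper: part (a) via the covariance decomposition $E\left[\left(\frac{e(X)}{e^*(X)}-1\right)\mu_1(X)\right]=cov\left[\frac{e(X)}{e^*(X)},\mu_1(X)\right]+E\left[\frac{e(X)}{e^*(X)}-1\right]\mu_1$ combined with $\mu_1=\mu_1^*$, and part (b) via the same sign-based case analysis that the paper invokes by pointing back to the proof of Theorem \ref{dr2}. Your reduction to $|A-B|<|A|$ with $A=E[W\mu_1(X)]$, $B=E[W\mu_1^*(X)]$ is just a cleaner bookkeeping of the identical argument.
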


One of the criteria in a), that $\mu_1=\mu_1^*$, is reasonable to assume when the corresponding moment condition is used in the estimation of the misspecified outcome model. Also, we have that criterion b) is the same as the necessary condition with the added assumption that the expectation of the (PS-error scaled) conditional outcomes have the same sign.

\begin{theorem}[Necessary condition for Bias$_1(\hat{\Delta}_{\text{\tiny{DR}}}^*)$ smaller than Bias$_1(\hat{\Delta}_{\text{\tiny{IPW}}_2}^*)$]\label{dr4} 
	If
	\begin{equation*}
	\left| E\left[\left(\frac{e(X)}{e^*(X)}-1\right)\left(\mu_1(X)-{\mu}^*_{1}(X)\right)\right]\right|<  \left|\frac{E\left[\frac{e(X)}{e^*(X)}\mu_1(X)\right]}{E\left[\frac {e(X)}{e^*(X)}\right]}-\mu_1\right|,
	\end{equation*}
	then
	\begin{align*}
	&E\left[\left(\frac{e(X)}{e^*(X)}-1\right)\left(\mu_1(X)\right)\right]-\frac{\left|cov\left[\frac{e(X)}{e^*(X)}, \mu_1(X)\right]\right|}{E\left[\frac {e(X)}{e^*(X)}\right]}< E\left[\left(\frac{e(X)}{e^*(X)}-1\right)\left[{\mu}^*_{1}(X)\right]\right]\\
	<&E\left(\left[\frac{e(X)}{e^*(X)}-1\right]\left[\mu_1(X)\right]\right)+\frac{\left|cov\left[\frac{e(X)}{e^*(X)}, \mu_1(X)\right]\right|}{E\left[\frac {e(X)}{e^*(X)}\right]}.
	\end{align*}
	
\end{theorem}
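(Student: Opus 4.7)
The plan is to reduce the stated hypothesis to a two-sided inequality on $E\!\left[\left(\frac{e(X)}{e^*(X)}-1\right)\mu^*_1(X)\right]$ by (i) rewriting the $\text{IPW}_2$ bias in covariance form, (ii) decomposing the DR bias as a difference involving the $\text{IPW}_1$-type expression, and (iii) unwrapping the absolute value.

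First I would simplify the right-hand side of the hypothesis. Using $E\!\left[\tfrac{e(X)}{e^*(X)}\mu_1(X)\right]=\operatorname{cov}\!\left[\tfrac{e(X)}{e^*(X)},\mu_1(X)\right]+E\!\left[\tfrac{e(X)}{e^*(X)}\right]\mu_1$, one obtains
\begin{equation*}
\frac{E\!\left[\tfrac{e(X)}{e^*(X)}\mu_1(X)\right]}{E\!\left[\tfrac{e(X)}{e^*(X)}\right]}-\mu_1
\;=\;\frac{\operatorname{cov}\!\left[\tfrac{e(X)}{e^*(X)},\mu_1(X)\right]}{E\!\left[\tfrac{e(X)}{e^*(X)}\right]}.
\end{equation*}
Assumptions \ref{overlap} and \ref{overlap2} guarantee $0<\tfrac{e(X)}{e^*(X)}<\infty$ almost surely, hence $E\!\left[\tfrac{e(X)}{e^*(X)}\right]>0$, so the absolute value of this quantity equals $\bigl|\operatorname{cov}\!\left[\tfrac{e(X)}{e^*(X)},\mu_1(X)\right]\bigr|\big/E\!\left[\tfrac{e(X)}{e^*(X)}\right]$.

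Next, on the left-hand side I would linearize by distributing:
\begin{equation*}
E\!\left[\left(\tfrac{e(X)}{e^*(X)}-1\right)\!\bigl(\mu_1(X)-\mu^*_1(X)\bigr)\right]
\;=\;E\!\left[\left(\tfrac{e(X)}{e^*(X)}-1\right)\mu_1(X)\right]\;-\;E\!\left[\left(\tfrac{e(X)}{e^*(X)}-1\right)\mu^*_1(X)\right].
\end{equation*}
The hypothesis then reads $|U-V|<W$, where $U=E\!\left[\left(\tfrac{e(X)}{e^*(X)}-1\right)\mu_1(X)\right]$, $V=E\!\left[\left(\tfrac{e(X)}{e^*(X)}-1\right)\mu^*_1(X)\right]$, and $W=\bigl|\operatorname{cov}\!\left[\tfrac{e(X)}{e^*(X)},\mu_1(X)\right]\bigr|\big/E\!\left[\tfrac{e(X)}{e^*(X)}\right]\ge 0$. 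Standard absolute-value manipulation ($|U-V|<W \Longleftrightarrow -W<U-V<W \Longleftrightarrow U-W<V<U+W$) yields exactly the two-sided bound claimed in the conclusion.

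There is no real obstacle here — the content is entirely an algebraic rewriting combined with the elementary fact that $|x|<y$ is equivalent to $-y<x<y$. The only point requiring care is verifying that $E\!\left[\tfrac{e(X)}{e^*(X)}\right]>0$ so the covariance fraction is unambiguously signed, which follows immediately from the overlap assumptions. Thus the proof is short and essentially just rearranges definitions; I would present it in the order: (a) covariance identity for the $\text{IPW}_2$ bias, (b) linearity splitting of the DR bias, (c) the equivalence of $|U-V|<W$ with the two-sided bound on $V$.
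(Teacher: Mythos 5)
Your proposal is correct and follows essentially the same route as the paper: rewrite the $\text{IPW}_2$ bias as $cov\left[\frac{e(X)}{e^*(X)},\mu_1(X)\right]\big/E\left[\frac{e(X)}{e^*(X)}\right]$, split the DR bias by linearity, and unwrap the absolute-value inequality into a two-sided bound on $E\left[\left(\frac{e(X)}{e^*(X)}-1\right)\mu^*_1(X)\right]$. The only difference is organizational: the paper handles the absolute value by a case split on the sign of the covariance, whereas you work directly with $\left|cov\left[\frac{e(X)}{e^*(X)},\mu_1(X)\right]\right|$ (justified since $E\left[\frac{e(X)}{e^*(X)}\right]>0$ under the overlap assumptions), which is a cleaner presentation of the same argument.
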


From the theorem we see that for the DR estimator to improve upon the normalized IPW estimator we need that the outcome misspecification is within an interval defined by the true conditional outcome and the absolute value of the covariance. This means that the smaller the covariance is, the more accuracy of the outcome model is required for the $\hat{\Delta}_{\text{\tiny{DR}}}^*$ to be less biased than $\hat{\Delta}_{\text{\tiny{IPW}}_2}^*$.
\vskip 0.5cm

\begin{rev}[Example 2 revisited for Bias$_1(\hat{\Delta}_{\text{\tiny{DR}}}^*)$ and Bias$_1(\hat{\Delta}_{\text{\tiny{IPW}}_2}^*)$]\\
	For the data generating process in Example 2 we investigate the first part of the bias
	
	\begin{align*}
	\left| E\left[\left(\frac{e(X)}{e^*(X)}-1\right)\left(\mu_1(X)-{\mu}^*_{1}(X)\right)\right]\right|&<  	
	\left|\frac{E\left[\frac{e(X)}{e^*(X)}\mu_1(X)\right]}{E\left[\frac {e(X)}{e^*(X)}\right]}-\mu_1\right|\\
	0.01<0.06
	\end{align*}
	implies
	\begin{align*}
	E\left[\left(\frac{e(X)}{e^*(X)}-1\right){\mu}_{1}^*(X)\right]&> E\left[\left(\frac{e(X)}{e^*(X)}-1\right)\left(\mu_1(X)\right)\right]-\frac{\left|cov\left[\frac{e(X)}{e^*(X)}, \mu_1(X)\right]\right|}{E\left[\frac {e(X)}{e^*(X)}\right]}\\
	-0.10&>-0.11-0.06=-0.17
	\end{align*}
	and 
	\begin{align*}
	E\left[\left(\frac{e(X)}{e^*(X)}-1\right){\mu}_{1}^*(X)\right]&<E\left[\left(\frac{e(X)}{e^*(X)}-1\right)\left(\mu_1(X)\right)\right]+\frac{\left|cov\left[\frac{e(X)}{e^*(X)}, \mu_1(X)\right]\right|}{E\left[\frac {e(X)}{e^*(X)}\right]}\\
	-0.10&<-0.11+0.06=-0.05
	\end{align*}
	
	\noindent which is consistent with Theorem \ref{dr4}. 
	
\end{rev}

In Theorem \ref{dr5} we give examples of sufficient conditions for the comparison of Bias$_1(\hat{\Delta}_{\text{\tiny{DR}}}^*)$ and Bias$_1(\hat{\Delta}_{\text{\tiny{IPW}}_2}^*)$.

\begin{theorem}[Sufficient conditions for Bias$_1(\hat{\Delta}_{\text{\tiny{DR}}}^*)$ smaller than Bias$_1(\hat{\Delta}_{\text{\tiny{IPW}}_2}^*)$]\label{dr5} 
	
	If
	a)
	$\mu_1=\mu_1^*$ and\\ $cov\left[\frac{e(X)}{e^*(X)},\mu_1(X)\right]-\left|\frac{cov\left[\frac{e(X)}{e^*(X)},\mu_1(X)\right]}{E\left[\frac {e(X)}{e^*(X)}\right]}\right|<cov\left[\frac{e(X)}{e^*(X)}, \mu_1^*(X)\right]
	<cov\left[\frac{e(X)}{e^*(X)}, \mu_1(X)\right]+\left|\frac{cov\left[\frac{e(X)}{e^*(X)},\mu_1(X)\right]}{E\left[\frac {e(X)}{e^*(X)}\right]}\right|$ \\
	or if,\\
	b)   $E\left[\left(\frac{e(X)}{e^*(X)}-1\right){\mu}_{1}(X)\right]$ and $cov\left[\frac{e(X)}{e^*(X)}, \mu_1(X)\right]$ are either both positive or both negative, 
	
	and
	\begin{equation*}
	\left|E\left[\left(\frac{e(X)}{e^*(X)}-1\right){\mu}_{1}^*(X)\right]\right|<\left|E\left[\left(\frac{e(X)}{e^*(X)}-1\right){\mu}_{1}(X)\right]+\frac{cov\left[\frac{e(X)}{e^*(X)}, \mu_1(X)\right]}{E\left[\frac {e(X)}{e^*(X)}\right]}\right|
	\end{equation*}
	
	then,
	
	\begin{equation*}
	\left| E\left[\left(\frac{e(X)}{e^*(X)}-1\right)\left(\mu_1(X)-{\mu}^*_{1}(X)\right)\right]\right|<  \left|\frac{E\left[\frac{e(X)}{e^*(X)}\mu_1(X)\right]}{E\left[\frac {e(X)}{e^*(X)}\right]}-\mu_1\right|.
	\end{equation*}
\end{theorem}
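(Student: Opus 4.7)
The plan is to reduce both conditions to the common target inequality $|A-B|<|C|$, where I set $W:=e(X)/e^*(X)$, $A:=E[(W-1)\mu_1(X)]$, $B:=E[(W-1)\mu_1^*(X)]$, and $C:=\mathrm{cov}(W,\mu_1(X))/E[W]$. Using that $E[W\mu_1(X)]-E[W]\mu_1=\mathrm{cov}(W,\mu_1(X))$, Equation \eqref{2} rewrites as $\mathrm{Bias}_1(\hat\Delta_{\text{\tiny{IPW}}_2}^*)=C$; expanding the product in \eqref{3} gives $\mathrm{Bias}_1(\hat\Delta_{\text{\tiny{DR}}}^*)=A-B$. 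So the conclusion of the theorem is exactly $|A-B|<|C|$.

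For part (a), I would apply the elementary identity $E[(W-1)g(X)]=\mathrm{cov}(W,g(X))+(E[W]-1)E[g(X)]$ to $g=\mu_1$ and $g=\mu_1^*$, subtract, and use $\mu_1=\mu_1^*$ to cancel the mean terms. This gives
\begin{equation*}
A-B=\mathrm{cov}(W,\mu_1(X))-\mathrm{cov}(W,\mu_1^*(X)).
\end{equation*}
The two-sided bound on $\mathrm{cov}(W,\mu_1^*(X))$ assumed in (a) is then literally $|\mathrm{cov}(W,\mu_1(X))-\mathrm{cov}(W,\mu_1^*(X))|<|C|$, i.e., $|A-B|<|C|$, so (a) is a one-step algebraic rearrangement.

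For part (b), the sign assumption on $A$ and $\mathrm{cov}(W,\mu_1(X))=C\,E[W]$ implies (since $E[W]>0$) that $A$ and $C$ share a sign, and hence $|A+C|=|A|+|C|$. The assumed strict inequality therefore reads $|B|<|A|+|C|$. I would then proceed by case analysis on the sign of $A$ (which fixes the sign of $C$) and on the position of $B$ relative to $A$, combining the sign compatibility with the bound on $|B|$ to confine $A-B$ to the interval $(-|C|,|C|)$. An equivalent framing that may be cleaner is to note that $|A-B|<|C|$ is the quadratic inequality $(A-B)^2<C^2$ in $B$, whose roots are $A\pm|C|$; the task is then to show the hypotheses of (b) place $B$ strictly between these roots.

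The main obstacle is this last step in (b): the bound $|B|<|A|+|C|$ by itself only locates $B$ in $(-(|A|+|C|),|A|+|C|)$, which is strictly wider than the target interval $(A-|C|,A+|C|)$. The proof must therefore genuinely use the sign compatibility between $A$ and $C$ (and the fact that $B$, as an expectation against $(W-1)$ of the limiting misspecified outcome model, inherits directional structure from how $\mu_1^*$ relates to $\mu_1$) to rule out the opposite-sign configurations in which $|A-B|\geq|C|$. The decomposition of the biases and the algebra for (a) are routine; the delicate work is precisely this sign-bookkeeping in (b).
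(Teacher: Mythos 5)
Your reduction of the problem to $|A-B|<|C|$ with $W=e(X)/e^*(X)$, $A=E[(W-1)\mu_1(X)]$, $B=E[(W-1)\mu_1^*(X)]$, $C=\mathrm{cov}(W,\mu_1(X))/E[W]$ is exactly right, and your argument for part (a) is correct and is essentially the paper's own proof: under $\mu_1=\mu_1^*$ the mean terms cancel, $A-B=\mathrm{cov}(W,\mu_1(X))-\mathrm{cov}(W,\mu_1^*(X))$, and the assumed two-sided covariance bound is literally $|A-B|<|C|$, while $|C|$ equals the absolute bias of $\hat{\Delta}_{\text{\tiny{IPW}}_2}^*$.

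For part (b), however, you have not produced a proof, and the obstacle you flag at the end is not removable "sign-bookkeeping" on $A$ and $C$: the hypotheses of (b), namely $\operatorname{sign}(A)=\operatorname{sign}(C)$ and $|B|<|A+C|=|A|+|C|$, genuinely do not determine the position of $B$ relative to $A$, which is what the conclusion $A-|C|<B<A+|C|$ requires. Concretely, take $A=1$, $C=1$ (both positive) and $B=-3/2$: then $|B|=3/2<2=|A|+|C|$, so the hypotheses hold, yet $|A-B|=5/2\not<1=|C|$. Nothing in the probabilistic structure rules this out, since $\mu_1^*(X)$ is an essentially arbitrary limiting function, so $\mathrm{cov}(W,\mu_1^*(X))$ and $E[\mu_1^*(X)]$ — and hence $B$ — can be moved freely for a fixed $W$ and $\mu_1(X)$. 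In the both-positive case the hypotheses deliver only the upper bound $B<A+|C|$, and in the both-negative case only the lower bound $B>A-|C|$; the other half of the interval is simply not implied. It is worth knowing that the paper's own proof of (b) suffers from the same one-sidedness (each sign case yields only one of the two required inequalities, and the displayed identity in the both-negative case, $-\left|E[(W-1)\mu_1(X)]+\mathrm{cov}(W,\mu_1(X))\right|=E[(W-1)\mu_1(X)]+\left|\mathrm{cov}(W,\mu_1(X))\right|$, holds only when the covariance vanishes), so the step you left open is not a routine omission: as stated, condition (b) cannot be pushed through to the conclusion without additional information locating $B$ (for example, a two-sided condition relative to $A$, which is tantamount to the conclusion itself).
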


Summarizing the results from the comparisons of Theorems \ref{dr2}, \ref{dr3}, \ref{dr4} and \ref{dr5} we note that the expected value of the product of the PS-model error and the true and misspecified conditional outcomes play important roles. Here, the covariances of the PS-model ratio and the true and misspecified conditional outcomes are two of their respective components. In Figure 1 we illustrate these parts with the data generating processes from Example \ref{ex2}. The PS-model ratio deviates from 1 for both small and large values of $X$, however more for smaller values of $X$. Since both conditional outcomes $\mu_1(X)$ and $\mu_1^*(X)$ are strictly increasing both covariances are positive although quite small due to the PS-model error increase for larger values of $X$ ($cov\left[e(X)/e^*(X), \mu_1(X)\right]=0.064$ and $cov\left[e(X)/e^*(X), \mu^*_1(X)\right]=0.074$). The interval characterization of the described conditions implies that if the two covariances are of the same magnitude the bias of $\hat{\Delta}_{\text{\tiny{DR}}}^*$ will often be smaller than the biases of $\hat{\Delta}_{\text{\tiny{IPW}}_1}^*$ and $\hat{\Delta}_{\text{\tiny{IPW}}_2}^*$.  

\begin{figure}[ht!]
	\begin{center}	\caption{Illustration of the components of the biases from Numerical example \ref{ex2}. Top left: $e(X)$ and $e^*(X)$ by $X$, top right: $e(X)/e^*(X)$ by $X$, bottom left: $\mu_1(X)$ and $\mu_1^*(X)$ by $X$ and bottom right: $\mu_1(X)$ and $\mu_1^*(X)$ by $e(X)/e^*(X)$.}
		\includegraphics[width=1\textwidth]{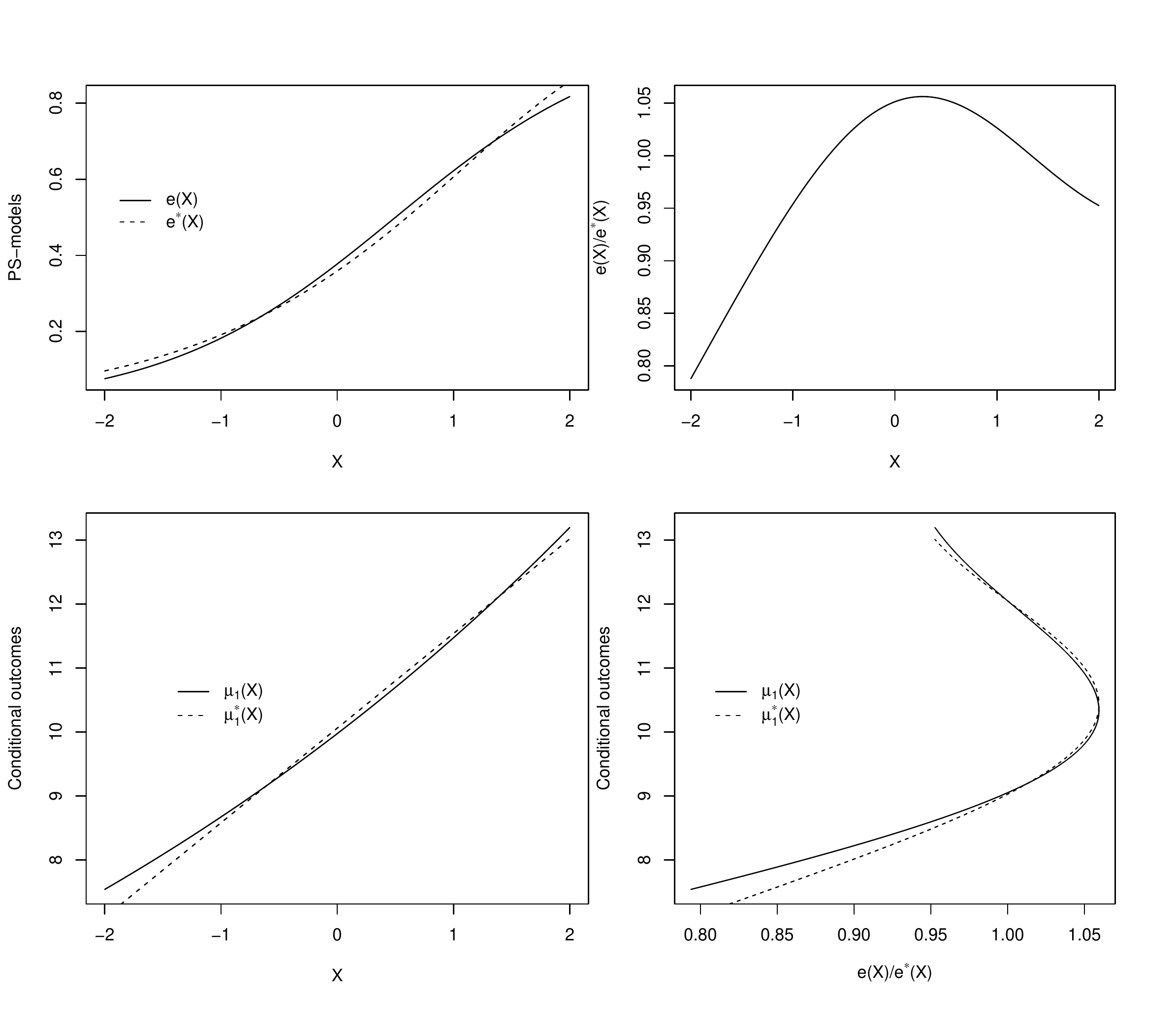}
	\end{center}
\end{figure}

\section{Simulation study}\label{simulations}

In order to investigate the asymptotic biases described in Section \ref{BIAS} and also the finite sample performance of $\hat{\Delta}^*_{\text{\tiny{IPW}}_1}$, $\hat{\Delta}^*_{\text{\tiny{IPW}}_2}$ and $\hat{\Delta}^*_{\text{\tiny{DR}}}$ under model misspecification we perform a simulation study with three different designs A, B and C. The first part of the simulations evaluate the finite sample performances of the estimators and consist of 1000 replications of sample sizes 500, 1000 and 5000.  We generate covariates $X_1\sim$ \textrm{Uniform}(1,4), $X_2\sim$ \textrm{Poisson}(3) and $X_3\sim$ \textrm{Bernoulli}(0.4). We use generalized linear models to generate a binary treatment $T$ and potential outcomes $Y(t)$, $t=0,1$ with second order terms of $X_1$ and $X_2$ in both the PS and OR models. The PS-distributions for the treated and controls are bounded away from zero and 1 under the true models and under the model misspecifications. The PS and OR models (for the DR estimator) are stepwise misspecified. We have three designs where:
\begin{description}
	\item[A] a quadratic term ${X_1}^2$ is omitted in the PS and OR models;
	\item [B] two quadratic terms, ${X_1}^2$ and ${X_2}^2$,  are omitted in the PS and OR models;
	\item [C] two quadratic terms are omitted and the both the OR and PS link functions are misspecified.
\end{description}

The glm family and link functions together with the true parameter values are given in Table \ref{designs} which also contains the details for the misspecified models. The simulation is performed with the statistical software R \cite{R}. 
\vskip 0.5cm
\begin{sidewaystable}[bt]
	\vskip 12cm
	\begin{small}
		\caption{Simulation Designs A, B and C}
		\begin{tabular}{lcccc}
			\toprule
			&	\multicolumn{2}{c}{TRUE MODEL}& \multicolumn{2}{c}{MISSPECIFIED MODEL} \\ 
			Models & Class  & Linear predictor and parameter values& Class & Linear predictor \\
			\midrule
			& & $\beta=(-1,0.6,0.1,0.9,0.1,0.7), \alpha_0=(3,0.5,0.2,0.5,0.2,0.2)$ & \\
			& & $\alpha_1=(4,1.1,0.1,0.5,0.3,0.2)$ & \\
			{\it Design A} & & & &\\
			PS & Binomial, logit & $X_1,X_2,X_1^2,X_2^2,X_3$ & Binomial, logit & $X_1,X_2,X_2^2,X_3$\\
			OR & Gaussian, identity &$X_1,X_2,X_1^2,X_2^2,X_3$& Gaussian, identity&$X_1,X_2,X_2^2,X_3$ \\
			\midrule
			{\it Design B} & & & &\\
			PS & Binomial, logit & $X_1,X_2,X_1^2,X_2^2,X_3$   & Binomial, logit & $X_1,X_2,X_3$\\
			OR & Gaussian, identity & $X_1,X_2,X_1^2,X_2^2,X_3$ & Gaussian, identity&$X_1,X_2,X_3$\\
			\midrule
			{\it Design C} & & & &\\
			PS & Binomial, cauchit & $X_1,X_2,X_1^2,X_2^2,X_3$  &Binomial, logit &$X_1,X_2,X_3$\\
			OR & Gamma, identity & $X_1,X_2,X_1^2,X_2^2,X_3$ & Gaussian, identity& $X_1,X_2,X_3$\\
		\end{tabular}
		\label{designs}
	\end{small}
\end{sidewaystable}

In Table \ref{table1} we give the simulation bias, standard error and MSE of the three estimators. When using the true models, i.e, when studying the estimators  $\hat{\Delta}_{\text{\tiny{IPW}}_1}$, $\hat{\Delta}_{\text{\tiny{IPW}}_2}$ and $\hat{\Delta}_{\text{\tiny{DR}}}$ the bias is small and decreases when the sample size increases and the standard errors follow the expected order with the smallest for $\hat{\Delta}_{\text{\tiny{DR}}}$ followed by $\hat{\Delta}_{\text{\tiny{IPW}}_2}$ and $\hat{\Delta}_{\text{\tiny{IPW}}_1}$ \cite{LD:04}. Under misspecification the bias does not decrease with the sample size but gets closer to the asymptotic biases, see Table \ref{theo}. Under misspecification the standard errors follow the same pattern as under the true models. The bias of $\hat{\Delta}^*_{\text{\tiny{IPW}}_1}$ is the largest whereas the $\hat{\Delta}^*_{\text{\tiny{IPW}}_2}$ and $\hat{\Delta}^*_{\text{\tiny{DR}}}$ are similar. The MSE of $\hat{\Delta}^*_{\text{\tiny{DR}}}$ is the smallest, however for $n=5000$, $\hat{\Delta}^*_{\text{\tiny{IPW}}_2}$ and $\hat{\Delta}^*_{\text{\tiny{DR}}}$ are very similiar.

In Table \ref{theo} we give numerical approximations for Bias$(\hat{\Delta}_{\text{\tiny{IPW}}_1}^*)$,
Bias$(\hat{\Delta}_{\text{\tiny{IPW}}_2}^*)$ and Bias$(\hat{\Delta}_{\text{\tiny{DR}}}^*)$ using a sample size of $n=1,000,000$. We also show the same approximations for Bias$_t(\hat{\Delta}_{\text{\tiny{IPW}}_1}^*)$,
Bias$_t(\hat{\Delta}_{\text{\tiny{IPW}}_2}^*)$ and Bias$_t(\hat{\Delta}_{\text{\tiny{DR}}}^*)$, $t=0,1$. Here, we see that the total bias is smallest for $\hat{\Delta}^*_{\text{\tiny{IPW}}_2}$ in Design A but smaller for $\hat{\Delta}^*_{\text{\tiny{DR}}}$ in Design B and C. The absolute value of the biases in the two parts are smallest for $\hat{\Delta}^*_{\text{\tiny{DR}}}$ in all designs.  We also give the expectations and covariances that are used for the necessary and sufficient conditions in Theorems \ref{dr2}-\ref{dr5}. We immediately see that the necessary condition for the absolute values of Bias$_t(\hat{\Delta}_{\text{\tiny{DR}}}^*)$ to be smaller than the absolute value of Bias$_t(\hat{\Delta}_{\text{\tiny{IPW}}_1}^*)$ holds for both $t=0,1$. 
The means $(\mu_0,\mu_1)$ are close to the means under model misspecification $(\mu^*_0,\mu^*_1)$ which is an assumption needed in order to evaluate the sufficient conditions in Theorems \ref{dr3} a) and \ref{dr5} a). By inspecting the covariances and the additional critera of the Theorems \ref{dr3} a) and \ref{dr5} a) in Table \ref{theo} we can see that the resulting inequalities of the theorems are in line with the results for the asymptotic biases.

Since we have that  $E\left[\left(e(X)/e^*(X)-1\right){\mu}_{1}(X)\right]$ and $cov\left[e(X)/e^*(X), \mu_1(X)\right]$ and\\ $E\left[\left((1-e(X))/(1-e^*(X))-1\right){\mu}_{0}(X)\right]$ and $cov\left[(1-e(X))(1-e^*(X)), \mu_0(X)\right]$ do not have the same sign in Design A and C the sufficient conditions in Theorems \ref{dr3} b) and \ref{dr5} b) cannot be applied.

\begin{figure}[bt]
	\begin{center}	\caption{Density plots of the propensity score distributions, $\hat{e}(X)$ and $\hat{e}^*(X)$ for treated and controls for Design A (top), B (middle), and C (bottom).}
		\includegraphics[width=1\textwidth]{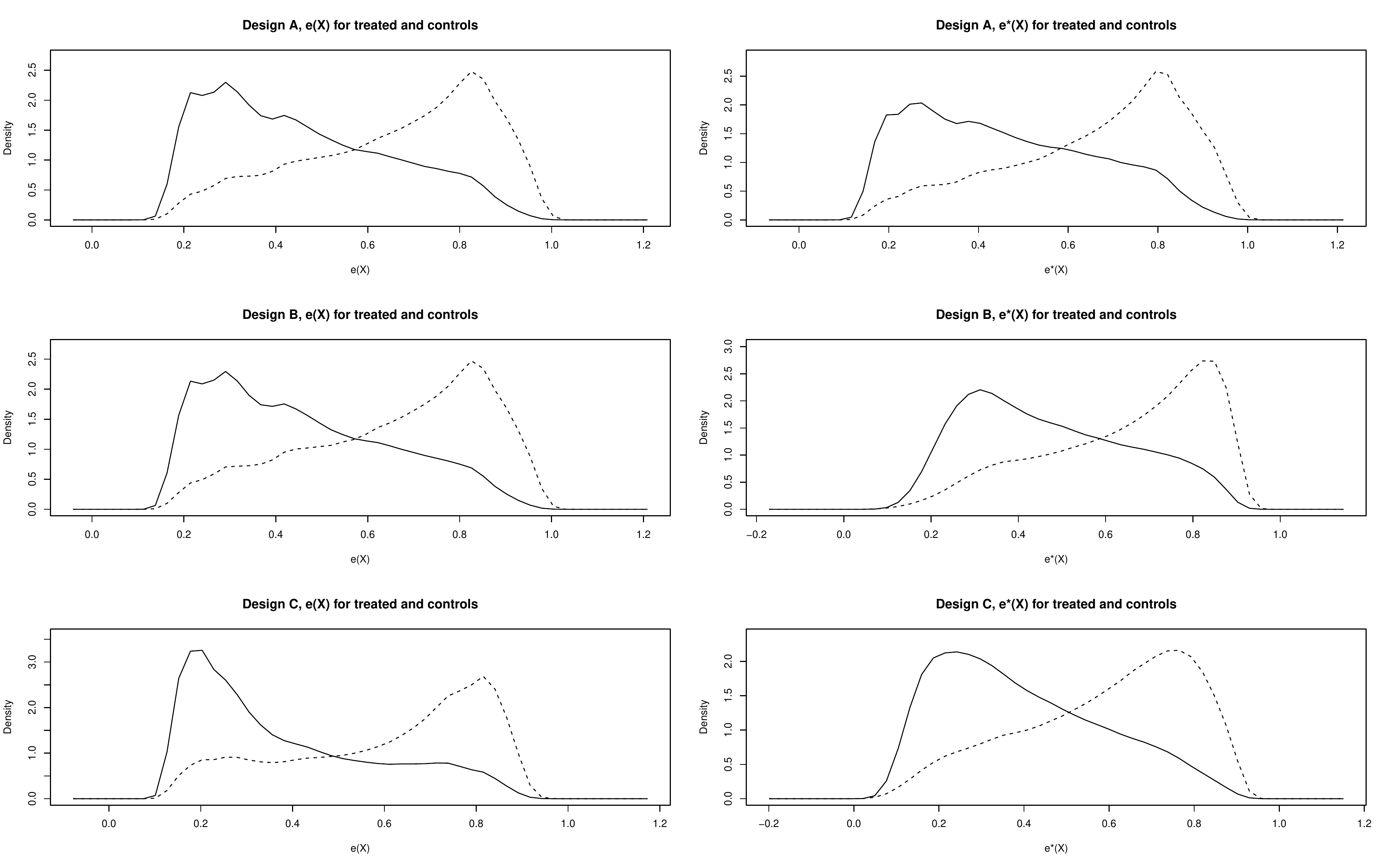}
	\end{center}
\end{figure}\label{biasFig}

\begin{table}[bt]
	\begin{small}
		\caption{Simulation results}
		\begin{tabular}{crc|rrr|rrr|rrr}
			\toprule
			\multicolumn{3}{c}{}&\multicolumn{9}{c}{ESTIMATORS}\\
			&&& \multicolumn{3}{c}{$\hat{\Delta}^*_{\text{\tiny{IPW}}_1}$} & \multicolumn{3}{c}{$\hat{\Delta}^*_{\text{\tiny{IPW}}_2}$}& \multicolumn{3}{c}{$\hat{\Delta}_{\text{\tiny{DR}}}^*$}\\

			\textbf{n} &    Models  &  Design & Bias & SD & MSE & Bias & SD & MSE& Bias & SD & MSE \\
			\midrule
			500 & \multicolumn{1}{c}{True} &  &0.018 & 0.368 & 0.136 &  0.013&0.138&0.019 &$<$0.001 &0.11&0.012 \\
			\textbf{} & \multicolumn{1}{c}{False} & A &0.118&0.390& 0.166&0.019&0.136 &0.019 &0.021&0.119 &0.015\\
			\textbf{} & \multicolumn{1}{c}{False} & B & 0.262&0.324&0.174& 0.033&0.126& 0.017& 0.024&0.110&0.013\\
			\textbf{} & \multicolumn{1}{c}{False} &C &0.229&0.349&0.174& -0.061& 0.128& 0.020&0.037&0.115&0.015 \\
			\midrule
			1000 & \multicolumn{1}{c}{True} & &0.010 & 0.254& 0.065&0.002& 0.098& 0.010&$<0.001$& 0.075& 0.006\\
			\textbf{} & \multicolumn{1}{c}{False} & A &0.112&0.270&0.085&0.008 &0.097&0.009&0.018&0.079&0.007\\
			\textbf{} & \multicolumn{1}{c}{False} & B &0.260&0.214&0.113&0.034&0.088&0.009&0.025&0.078&0.007 \\
			\textbf{} & \multicolumn{1}{c}{False} & C &0.243&0.222&0.108&-0.056&0.092&0.012&0.043&0.081&0.008\\
			\midrule
			5000 & \multicolumn{1}{c}{True}& &0.007&0.107&0.011&0.005&0.044&0.002&0.003&0.035&0.001 \\
			\textbf{} & \multicolumn{1}{c}{False} & A &0.112&0.116&0.026&0.013&0.044&0.002&0.025&0.037&0.002\\
			\textbf{} & \multicolumn{1}{c}{False} & B &0.260&0.090&0.076& 0.033&0.039&0.003& 0.025&0.036&0.002\\
			\textbf{} & \multicolumn{1}{c}{False} & C &0.226&0.096&0.060&-0.057&0.040&0.005&0.040&0.036&0.003\\
			
			\bottomrule
		\end{tabular}
		\label{table1}
	\end{small}
\end{table}

\begin{table}[bt] 
	\caption{Asymptotic approximations from Designs A, B and C.
	}
	\centering
	\begin{tabular}{l|ccc}
		\toprule
		\multicolumn{1}{c}{} & \multicolumn{3}{c}{Design}   \\ 
		Parameter& A & B  & C   \\  
		\midrule
		$\mu_1$ &11.127&11.127&12.130\\
		$\mu_1^*$&11.092&11.098&12.097\\
		$\mu_0$ &8.628&8.628&9.633\\
		$\mu_0^*$&8.578&8.564&9.582\\
		\midrule
		Bias$(\hat{\Delta}_{\text{\tiny{IPW}}_1}^*)$ &0.096&0.264&0.213\\
		Bias$(\hat{\Delta}_{\text{\tiny{IPW}}_2}^*)$ & 0.007&0.033&-0.057\\
		Bias$(\hat{\Delta}_{\text{\tiny{DR}}}^*)$&  0.017&0.025&0.037\\
		\midrule
		Bias$_1(\hat{\Delta}_{\text{\tiny{IPW}}_1}^*)$ & 0.024&0.128&0.130\\
		Bias$_1(\hat{\Delta}_{\text{\tiny{IPW}}_2}^*)$ & -0.028&0.013 &-0.089\\
		Bias$_1(\hat{\Delta}_{\text{\tiny{DR}}}^*)$&  0.009 &0.009&0.029\\
		\midrule
		$E\left[\frac{e(X)}{e^*(X)}\right]$	 & 1.005 & 1.010& 1.019 \\
		$cov\left[\frac{e(X)}{e^*(X)}, \mu_1(X)\right]$ &-0.029& 0.015&-0.095  \\
		$cov\left[\frac{e(X)}{e^*(X)}, \mu_1^*(X)\right]$& -0.040& 0.006& -0.121\\
		\midrule
		$E\left[\frac{e(X)}{e^*(X)}-1\right]\mu_1$ & 0.060&0.113&0.224\\
		$E\left[\left(\frac{e(X)}{e^*(X)}-1\right)\mu_1(X)\right]$ & 0.030&0.128&0.130\\
		$E\left[\left(\frac{e(X)}{e^*(X)}-1\right)\mu_1^*(X)\right]$& 0.019&0.119&0.102\\
		\midrule
		Bias$_2(\hat{\Delta}_{\text{\tiny{IPW}}_1}^*)$&0.076 &0.137&0.083\\
		Bias$_2(\hat{\Delta}_{\text{\tiny{IPW}}_2}^*)$ &0.039&0.022&0.031  \\
		Bias$_2(\hat{\Delta}_{\text{\tiny{DR}}}^*)$& 0.011&0.018&0.008 \\
		\midrule
		
		$E\left[\frac{1-e(X)}{1-e^*(X)}\right]$	 & 0.995 &0.987&0.995 \\
		$cov\left[\frac{1-e(X)}{1-e^*(X)}, \mu_0(X)\right]$ &-0.037&-0.017&-0.033 \\
		$cov\left[\frac{1-e(X)}{1-e^*(X)}, \mu_0^*(X)\right]$&-0.026&-0.004&-0.024 \\
		\midrule
		$E\left[\frac{1-e(X)}{1-e^*(X)}-1\right]\mu_0$ &-0.038 &0.111 &-0.052\\
		$E\left[\left(\frac{1-e(X)}{1-e^*(X)}-1\right)\mu_0(X)\right]$ & -0.074&-0.128&-0.085 \\
		$E\left[\left(\frac{1-e(X)}{1-e^*(X)}-1\right)\mu_0^*(X)\right]$&-0.065&-0.114&-0.075\\
		\bottomrule
	\end{tabular}
	\label{theo}
\end{table}

\section{Discussion}
In this paper we investigate biases of two IPW estimators and a DR estimator under model misspecification. For this purpose, we use a generic probability limit, under  misspecification of the PS and OR models, which exists under general conditions. Since the propensity score enters the estimator in different ways for the IPW estimators under study the consequences of the model misspecification are not the same. The bias of the IPW estimators depend on the covariance between the PS-model error and the conditional outcome in different ways and the resulting bias can be in opposite directions. Comparing the bias of the DR estimator with a simple IPW estimator the necessary condition for the DR estimator to have a smaller bias is that the expectation of the outcome model under misspecification is less than twice the true conditional outcome, where the expectations includes a scaling with the PS-model error. For the comparison with the normalized IPW estimator the (PS-error scaled) misspecified outcome involves an interval defined by the true conditional outcome adding and subtracting the absolute value of the covariance between the PS-model error and the conditional outcome.  

The comparisons of the IPW and DR estimators suggests that in general the DR estimator may have a smaller bias than the IPW estimators, under misspecification of the outcome model, however there is no guarantee that this is the case. Since all biases include the PS-model error we suggest that a researcher should be careful when modelling the PS whenever such a model is involved.

To our knowledge, there are only simulation studies comparing DR-estimators with other estimators \cite {KS:07,IW:12} under the assumption that all models are misspecified. In this paper we study the same problem with an analytical approach although the comparisons are made between a DR estimator and IPW estimators.

\section*{Acknowledgement}
The authors acknowledge the Royal Swedish Academy of Sciences and the Swedish Research Council, (grant number 2016-00703) for financial support. 

\addresseshere 

\bibliographystyle{chicago}

\bibliography{Referenser}

\newpage
\appendix

\section{Appendix}
\subsection{Regularity conditions for applying a weak law of large numbers for averages of functions with estimated parameters}\label{App:AppendixA}

\noindent The convergence in probability of $\hat{\Delta}^*_{\text{\tiny{IPW}}_1}$, $\hat{\Delta}^*_{\text{\tiny{IPW}}_2}$ and $\hat{\Delta}^*_{\text{\tiny{DR}}}$ to their corresponding expectations would follow directly from a WLLN for an iid sample of $(T_i,X_i,Y_i)$ except for the estimated parameters $\hat{\beta}^*$ in $\hat{e}^*(X_i)$ and  and $\hat{\alpha}_t^*$ in $\hat{\mu}^*_t(X_i)$, $t=0,1$. To justify the biases in Section \ref{BIAS} consider a general representation of a function $g\left[T,Y,X,\hat{\theta}\right]$ where $\hat{\theta}\stackrel{p}{\longrightarrow}{\theta}_0$ and 

\begin{equation}\label{WLLN}
\frac{1}{n}\sum g\left[X_i,T_i,Y_i,\hat{\theta}\right]\stackrel{p}{\longrightarrow}E\left[g(T,X,Y,\theta_0)\right]
\end{equation}

\noindent The $\hat{\theta}$ in (\ref{WLLN}) corresponds to $\hat{\beta}^*$ for $\hat{\Delta}^*_{\text{\tiny{IPW}}_1}$, $\hat{\Delta}^*_{\text{\tiny{IPW}}_2}$ and $(\hat{\alpha}^*,\hat{\beta}^*)$ for  $\hat{\Delta}^*_{\text{\tiny{DR}}}$ and under Assumptions \ref{FPS} and \ref{FOR} the consistency of $\hat{\theta}$ is ensured. Regularity conditions for the function $g$ can be given see e.g., citet[Theorem 7.3]{SB:13} who show that (\ref{WLLN}) holds for differentiable functions with bounded derivatives (wrt $\theta$). The regularity conditions for $g\left[X_i,T_i,Y_i,\hat{\theta}\right]$, for the three estimators, imply conditions on the models $e^*(X,\beta^*)$ and $\mu^*_t(X,\alpha_t^*)$ such that the regularity condition for $g$ is satisfied. Under (\ref{WLLN}) we can insert the limiting values $\beta^*$ and $\alpha^*$  and their corresponding $e^*(X)$ and $\mu^*_t(X)$, $t=0,1$ when taking a WLLN. 

\subsection{Biases}\label{app1b}

Under the regularity conditions and Assumptions 5-6 (IPW) and 5-7 (DR) we derive the bias in Theorem \ref{biasIPW1} below. For $\hat{\Delta}^*_{\text{\tiny{IPW}}_1}$: 

\begin{equation*}
\frac{1}{n}\sum_{i=1}^n\frac{T_i{Y_i}}{\hat{e}^*(X_i)}-\frac{1}{n}\sum_{i=1}^n\frac{(1-T_i){Y_i}}{1-\hat {e}^*(X_i)}\stackrel{p}{\longrightarrow}E\left[\frac{TY}{e^*(X)}\right]-E\left[\frac{\left(1-T\right)Y}{\left(1-e^*(X)\right)}\right], 
\end{equation*}

\noindent and 

\begin{align*}
E\left[\frac{TY}{e^*(X)}\right]-E\left[\frac{\left(1-T\right)Y}{\left(1-e^*(X)\right)}\right]&=E\left[E\left[\frac{TY}{e^*(X)}\bigg| X\right]\right]-E\left[E\left[\frac{\left(1-T\right)Y}{\left[1-e^*(X)\right]}\bigg| X\right]\right] \\
&=E\left[\frac{e(X)}{e^*(X)}\mu_1(X)\right]
-E\left[\frac{\left(1-e(X)\right)}{\left(1-e^*(X)\right)}\mu_0(X)\right].
\end{align*}

\noindent subtracting with $\Delta$ gives
\begin{align*}
\text{Bias}(\hat{\Delta}_{\text{\tiny{IPW}}_1}^*)= &E\left[\frac{e(X)}{e^*(X)}\mu_1(X)\right]
-E\left[\frac{1-e(X)}{1-e^*(X)}\mu_0(X)\right]-(\mu_1-\mu_0).\\
\end{align*}

\noindent The biases of Theorems \ref{biasIPW2} and \ref{biasDR} are derived similarly. 
\subsection{Comparisons} \label{app2}

To study the consequences of model misspecification for the estimators we compare each difference involving $\mu_1(X)$ and $\mu_0(X)$ separately. For example we study Bias$(\hat{\Delta}_{\text{\tiny{IPW}}_1}^*)$ 

\begin{align*}
\hat{\Delta}_{\text{\tiny{IPW}}_1}^*-\Delta\stackrel{p}{\longrightarrow}&E\left[\frac{e(X)}{e^*(X)}\mu_1(X)\right]
-E\left[\frac{1-e(X)}{1-e^*(X)}\mu_0(X)\right]-(\mu_1-\mu_0)\\
=&E\left[\frac{e(X)}{e^*(X)}\mu_1(X)\right]-\mu_1+\mu_0-E\left[\frac{1-e(X)}{1-e^*(X)}\mu_0(X)]\right]
\end{align*}

\noindent Inequalities concerning the biases are made with respect to the absolute values for two parts separately, e.g., for Bias$(\hat{\Delta}_{\text{\tiny{IPW}}_1}^*)$ we investigate
\begin{equation}\label{ped1}
\left|\text{Bias}_1(\hat{\Delta}_{\text{\tiny{IPW}}_1}^*)\right|=\left|E\left[\frac{e(X)}{e^*(X)}\mu_1(X)\right]-\mu_1\right|.
\end{equation}

\noindent Since $|-a|=|a|$ the second part is 

\begin{equation}\label{ped2}
\left|\text{Bias}_2(\hat{\Delta}_{\text{\tiny{IPW}}_1}^*)\right|<\left|E\left[\frac{1-e(X)}{1-e^*(X)}\mu_0(X)\right]-\mu_0\right|,
\end{equation}

\noindent and similarly for (\ref{2}). The conditions derived for the first part of the biases, (\ref{1}) and (\ref{2}), can be directly applied to the second part of the biases replacing $e(X)/e^*(X)$, with $(1-e(X))/(1-e^*(X))$ and $\mu_1(X)$ with $\mu_0(X)$. In a similar manner we have for Bias$_2(\hat{\Delta}_{\text{\tiny{DR}}}^*)$ 

\begin{equation*}
\left|E\left[\frac{(1-e(X))-
	(1-e^*(X))\left(\mu_{0}(X)-\mu^*_0(X)\right)}{\left(1-e^*(X)\right)}\right]\right|=\left|E\left[\frac{\left(e(X)
	-e^*(X)\right)\left(\mu_{0}(X)-\mu^*_0(X)\right)}{\left(1-e^*(X)\right)}\right]\right|,
\end{equation*}

\noindent and the conditions derived for the first part of the bias, (\ref{3}), can be directly applied to (\ref{ped2}) additionally replacing $\mu_1^*(X)$ with $\mu_0^*(X)$.
\vskip 0.5cm

\begin{proof}[Proof of Theorem \ref{dr2}]
	
	\vskip 0.5cm
	1. Assuming that $E\left[\left(\frac{e(X)}{e^*(X)}-1\right)\mu_1(X)\right]> 0$: 
	
	\begin{equation*}
	\left| E\left[\left(\frac{e(X)}{e^*(X)}-1\right)\left(\mu_1(X)-{\mu}^*_{1}(X)\right)\right]\right|<  	 E\left[\left(\frac{e(X)}{e^*(X)}-1\right)\mu_1(X)\right],
	\end{equation*} 
	
	\noindent Here, we have 
	
	\begin{align*}
	-E\left[\left(\frac{e(X)}{e^*(X)}-1\right)\mu_1(X)\right] &<\nonumber E\left[\left(\frac{e(X)}{e^*(X)}-1\right)\left(\mu_1(X)-{\mu}^*_{1}(X)\right)\right]< E\left[\left(\frac{e(X)}{e^*(X)}-1\right)\mu_1(X)\right],
	\end{align*}
	\noindent and
	\begin{equation}\label{cas}
	0 < E\left[\left(\frac{e(X)}{e^*(X)}-1\right)\mu_1^*(X)\right]<   	 2\cdot E\left[\left(\frac{e(X)}{e^*(X)}-1\right)\mu_1(X)\right].
	\end{equation}

	2. Assuming that $E\left[\left(\frac{e(X)}{e^*(X)}-1\right)\mu_1(X)\right]< 0$: 
	
	\begin{equation*}
	\left| E\left[\left(\frac{e(X)}{e^*(X)}-1\right)\left(\mu_1(X)-{\mu}^*_{1}(X)\right)\right]\right|<  	 -E\left[\left(\frac{e(X)}{e^*(X)}-1\right)\mu_1(X)\right],
	\end{equation*} 
	
	\noindent Here, we have 
	
	\begin{align}\label{case2}
	E\left[\left(\frac{e(X)}{e^*(X)}-1\right)\mu_1(X)\right] &<\nonumber E\left[\left(\frac{e(X)}{e^*(X)}-1\right)\left(\mu_1(X)-{\mu}^*_{1}(X)\right)\right]<- E\left[\left(\frac{e(X)}{e^*(X)}-1\right)\mu_1(X)\right],\\
	2\cdot E\left[\left(\frac{e(X)}{e^*(X)}-1\right)\mu_1(X)\right] &<  E\left[\left(\frac{e(X)}{e^*(X)}-1\right)\mu_1^*(X)\right]<   0	 
	\end{align}
	and by (\ref{cas}) and (\ref{case2}) 
	\begin{align*}
	\left|E\left[\left(\frac{e(X)}{e^*(X)}-1\right)\mu_1^*(X)\right]\right|\leq  2\cdot\left| E\left[\left(\frac{e(X)}{e^*(X)}-1\right)\mu_1(X)\right] \right|.
	\end{align*}
\end{proof}

\begin{proof}[Proof of Theorem \ref{dr3}]

	\noindent a) We have that 
	
	\begin{equation*}
	\left|cov\left[\frac{e(X)}{e^*(X)},\mu_1(X)\right]-cov\left[\frac{e(X)}{e^*(X)},\mu_1^*(X)\right]\right|=\left|E\left[\left(\frac{e(X)}{e^*(X)}-1\right)\left(\mu_1(X)-{\mu}^*_{1}(X)\right)\right]\right|.
	\end{equation*}
	\vskip 0.35cm
	by $\mu^*_1=\mu_1$. Further, assuming that $0<E\left[\frac{e(X)}{e^*(X)}-1\right] \mu_1<cov\left[\frac{e(X)}{e^*(X)},\mu_1^*(X)\right]<cov\left[\frac{e(X)}{e^*(X)},\mu_1(X)\right]$
	yields
	\begin{align*}&	0<cov\left[\frac{e(X)}{e^*(X)},\mu_1(X)\right]-cov\left[\frac{e(X)}{e^*(X)},\mu_1^*(X)\right]<cov\left[\frac{e(X)}{e^*(X)},\mu_1(X)\right]-E\left[\frac{e(X)}{e^*(X)}-1\right] \mu_1,
	\end{align*}
	\vskip 0.35cm
	\noindent from which it follows that
	
	\begin{equation*}
	\left|cov\left[\frac{e(X)}{e^*(X)},\mu_1(X)\right]-cov\left[\frac{e(X)}{e^*(X)},\mu_1^*(X)\right]\right|<\left|cov\left[\frac{e(X)}{e^*(X)},\mu_1(X)\right]+E\left[\frac{e(X)}{e^*(X)}-1\right] \mu_1\right|,
	\end{equation*}	
	\vskip 0.5cm
	\noindent which is equivalent to the desired result.
	
	\vskip 0.5cm
	b) see proof of Theorem \ref{dr2}.
	
\end{proof}

\begin{proof}[Proof of Theorem \ref{dr4}]
	\vskip 0.35cm
	1. Assuming that:\\
	$$cov\left[\frac{e(X)}{e^*(X)}, \mu_1(X)\right]> 0\iff\frac{E\left[\frac{e(X)}{e^*(X)}\mu_1(X)\right]}{E\left[\frac {e(X)}{e^*(X)}\right]}-\mu_1> 0$$

	\noindent Here, we have 
	\begin{footnotesize}
		\begin{align*}
		-\frac{E\left[\frac{e(X)}{e^*(X)}\mu_1(X)\right]}{E\left[\frac {e(X)}{e^*(X)}\right]}+\mu_1 &< E\left[\left(\frac{e(X)}{e^*(X)}-1\right)\left(\mu_1(X)-{\mu}^*_{1}(X)\right)\right]<\frac{E\left[\frac{e(X)}{e^*(X)}\mu_1(X)\right]}{E\left[\frac {e(X)}{e^*(X)}\right]}-\mu_1,\\\nonumber
		-cov\left[\frac{e(X)}{e^*(X)}, \mu_1(X)\right] &< E\left[\frac {e(X)}{e^*(X)}\right] E\left[\left(\frac{e(X)}{e^*(X)}-1\right)\left(\mu_1(X)-{\mu}^*_{1}(X)\right)\right]<cov\left[\frac{e(X)}{e^*(X)}, \mu_1(X)\right],\\\nonumber
		\end{align*}
	\end{footnotesize} and
	\begin{footnotesize}
		\begin{equation}\label{dr.a}
		E\left[\left(\frac{e(X)}{e^*(X)}-1\right)\mu_1(X)\right]-\frac{cov\left[\frac{e(X)}{e^*(X)}, \mu_1(X)\right]}{E\left[\frac {e(X)}{e^*(X)}\right]}< E\left[\left(\frac{e(X)}{e^*(X)}-1\right)\mu_1^*(X)\right]<E\left[\left(\frac{e(X)}{e^*(X)}-1\right)\mu_1(X)\right]+\frac{cov\left[\frac{e(X)}{e^*(X)}, \mu_1(X)\right]}{E\left[\frac {e(X)}{e^*(X)}\right]}
		\end{equation}
	\end{footnotesize}
	
	2. Assuming that:
	$$cov\left[\frac{e(X)}{e^*(X)}, \mu_1(X)\right]< 0\iff\frac{E\left[\frac{e(X)}{e^*(X)}\mu_1(X)\right]}{E\left[\frac {e(X)}{e^*(X)}\right]}-\mu_1< 0,$$ 
	\noindent then it follows that
	\begin{footnotesize}
		\begin{align*}
		\frac{E\left[\frac{e(X)}{e^*(X)}\mu_1(X)\right]}{E\left[\frac {e(X)}{e^*(X)}\right]}-\mu_1&< E\left[\left(\frac{e(X)}{e^*(X)}-1\right)\left(\mu_1(X)-{\mu}^*_{1}(X)\right)\right]<-\frac{E\left[\frac{e(X)}{e^*(X)}\mu_1(X)\right]}{E\left[\frac {e(X)}{e^*(X)}\right]}+\mu_1\\\nonumber
		cov\left[\frac{e(X)}{e^*(X)}, \mu_1(X)\right] &< E\left[\frac {e(X)}{e^*(X)}\right] E\left[\left(\frac{e(X)}{e^*(X)}-1\right)\left(\mu_1(X)-{\mu}^*_{1}(X)\right)\right]<-cov\left[\frac{e(X)}{e^*(X)}, \mu_1(X)\right],\\\nonumber
		\end{align*}
	\end{footnotesize}
	\noindent and
	\begin{footnotesize}
		\begin{equation}\label{dr.b}
		E\left[\left(\frac{e(X)}{e^*(X)}-1\right)\mu_1(X)\right]+\frac{cov\left[\frac{e(X)}{e^*(X)}, \mu_1(X)\right]}{E\left[\frac {e(X)}{e^*(X)}\right]}< E\left[\left(\frac{e(X)}{e^*(X)}-1\right)\mu_1^*(X)\right]<E\left[\left(\frac{e(X)}{e^*(X)}-1\right)\mu_1(X)\right]-\frac{cov\left[\frac{e(X)}{e^*(X)}, \mu_1(X)\right]}{E\left[\frac {e(X)}{e^*(X)}\right]},	
		\end{equation}
	\end{footnotesize}
	
	hence by \eqref{dr.a} and \eqref{dr.b} we conclude that 
	
	\begin{equation*}\scalebox{0.9}{$
		E\left[\left(\frac{e(X)}{e^*(X)}-1\right)\mu_1(X)\right]-\frac{\left|cov\left[\frac{e(X)}{e^*(X)}, \mu_1(X)\right]\right|}{E\left[\frac {e(X)}{e^*(X)}\right]}< E\left[\left(\frac{e(X)}{e^*(X)}-1\right)\mu_1^*(X)\right]<E\left[\left(\frac{e(X)}{e^*(X)}-1\right)\mu_1(X)\right]+\frac{\left|cov\left[\frac{e(X)}{e^*(X)}, \mu_1(X)\right]\right|}{E\left[\frac {e(X)}{e^*(X)}\right]},$}
	\end{equation*}
\end{proof}

\begin{proof} [Proof of Theorem \ref{dr5}]
	For a) assuming $cov\left[\frac{e(X)}{e^*(X)},\mu_1(X)\right]-\left|\frac{cov\left[\frac{e(X)}{e^*(X)},\mu_1(X)\right]}{E\left[\frac {e(X)}{e^*(X)}\right]}\right|<cov\left[\frac{e(X)}{e^*(X)}, \mu_1^*(X)\right]
	<cov\left[\frac{e(X)}{e^*(X)}, \mu_1(X)\right]+\left|\frac{cov\left[\frac{e(X)}{e^*(X)},\mu_1(X)\right]}{E\left[\frac {e(X)}{e^*(X)}\right]}\right|$ is equivalent to 
	\begin{equation*}
	\left|cov\left[\frac{e(X)}{e^*(X)},\mu_1(X)\right]-cov\left[\frac{e(X)}{e^*(X)},\mu_1^*(X)\right]\right|<\frac{\left|cov\left[\frac{e(X)}{e^*(X)}, \mu_1(X)\right]\right|}{E\left[\frac{e(X)}{e^*(X)}\right]}
	\end{equation*}
	
	\vskip 0.5cm
	further, $\mu^*_1=\mu_1$ implies
	\begin{equation*}
	\left|cov\left[\frac{e(X)}{e^*(X)},\mu_1(X)\right]-cov\left[\frac{e(X)}{e^*(X)},\mu_1^*(X)\right]\right|=\left|E\left[\left(\frac{e(X)}{e^*(X)}-1\right)\left(\mu_1(X)-{\mu}^*_{1}(X)\right)\right]\right|,
	\end{equation*}
	and also, 
	\begin{equation*}
	\frac{\left|cov\left[\frac{e(X)}{e^*(X)}, \mu_1(X)\right]\right|}{E\left[\frac{e(X)}{e^*(X)}\right]}=\left|\frac{E\left[\frac{e(X)}{e^*(X)}\mu_1(X)\right]}{E\left[\frac {e(X)}{e^*(X)}\right]}-\mu_1\right|,
	\end{equation*}	
	which establishes the result.
	\vskip 0.5cm
	For b) we use that if $E\left[\left(\frac{e(X)}{e^*(X)}-1\right){\mu}_{1}(X)\right]$ and $cov\left[\frac{e(X)}{e^*(X)}, \mu_1(X)\right]$ are both positive
	
	then,
	
	\begin{align*}
	E\left[\left(\frac{e(X)}{e^*(X)}-1\right){\mu}_{1}(X)\right]+\left|cov\left[\frac{e(X)}{e^*(X)}, \mu_1(X)\right]\right|&=\left|E\left[\left(\frac{e(X)}{e^*(X)}-1\right){\mu}_{1}(X)\right]+cov\left[\frac{e(X)}{e^*(X)}, \mu_1(X)\right]\right|\\
	\intertext{and}
	-\left|E\left\{\left[\frac{e(X)}{e^*(X)}-1\right]{\mu}_{1}(X)\right\}+cov\left[\frac{e(X)}{e^*(X)}, \mu_1(X)\right]\right|&<E\left[\left(\frac{e(X)}{e^*(X)}-1\right){\mu}_{1}(X)\right]+\left|cov\left[\frac{e(X)}{e^*(X)}, \mu_1(X)\right]\right|.
	\end{align*}
	If $E\left[\left(\frac{e(X)}{e^*(X)}-1\right){\mu}_{1}(X)\right]$ and $cov\left[\frac{e(X)}{e^*(X)}, \mu_1(X)\right]$ are both negative, then we have
	
	\begin{align*}
	E\left[\left(\frac{e(X)}{e^*(X)}-1\right){\mu}_{1}(X)\right]+\left|cov\left[\frac{e(X)}{e^*(X)}, \mu_1(X)\right]\right|&<\left|E\left[\left(\frac{e(X)}{e^*(X)}-1\right){\mu}_{1}(X)\right]+cov\left[\frac{e(X)}{e^*(X)}, \mu_1(X)\right]\right|,\\
	\intertext{and}
	-\left|E\left[\left(\frac{e(X)}{e^*(X)}-1\right){\mu}_{1}(X)\right]+cov\left[\frac{e(X)}{e^*(X)}, \mu_1(X)\right]\right|&=E\left[\left(\frac{e(X)}{e^*(X)}-1\right){\mu}_{1}(X)\right]+\left|cov\left[\frac{e(X)}{e^*(X)}, \mu_1(X)\right]\right|,
	\end{align*}
	and the necessary condition from Theorem \ref{dr4} follows.
\end{proof}

\end{document}